\def\AF{{\mathcal{AF}}}
\def\sat{{\rm sat}}
\def\:{\colon}
\def\.{,\dots,}
\def\into{\hookrightarrow}
\newcommand{\oM}{{\overline{M}}}
\newcommand{\oP}{{\overline{P}}}
\newcommand{\oQ}{{\overline{Q}}}
\newcommand{\oR}{{\overline{R}}}
\def\Aut{{\rm Aut}}
\def\toisom{\xrightarrow{{_\sim}}}
\def\ZZ{\mathbf Z}
\def\NN{\mathbf N}
\def\cL{\mathcal L}
\def\cO{\mathcal O}
\def\uX{{\underline{X}}}
\def\uS{{\underline{S}}}
\def\L#1{\mathcal{L}og_{#1}}
\def\u#1{\underline{#1}}
\def\o#1{\overline{#1}}
\def\gp{\textrm{gp}}
\def\c#1{\mathcal{#1}}
\newcommand{\cF}{{\mathcal F}}
\newcommand{\cZ}{{\mathcal Z}}
\newcommand{\cX}{{\mathcal X}}
\newcommand{\cU}{{\mathcal U}}
\newcommand{\HOM}{\operatorname{HOM}}
\newcommand{\Hom}{\operatorname{Hom}}
\newcommand{\Spec}{\operatorname{Spec}}
\newcommand{\dirlim}{\displaystyle \lim_{ \longrightarrow } \,}
   \newtheorem{theorem}[subsubsection]{Theorem}
      \newtheorem*{theorem*}{Theorem}
   \newtheorem{proposition}[subsubsection]{Proposition}
   \newtheorem{lemma}[subsubsection]{Lemma}
   \newtheorem{corollary}[subsubsection]{Corollary}
   \newtheorem*{conjecture*}{Conjecture}
\theoremstyle{definition}
          \newtheorem*{exercise*}{Exercise}
   \newtheorem{example}[subsubsection]{Example}
   \newtheorem*{example*}{Example}
   \newtheorem{definition}[subsubsection]{Definition}
   \newtheorem*{definition*}{Definition}
   \newtheorem{remark}[subsubsection]{Remark}
\title{Logarithmically Regular Morphisms}
\begin{document}

\author[S. Molcho]{Sam Molcho}
\address{Einstein Institute of Mathematics\\
               The Hebrew University of Jerusalem\\
                Giv'at Ram, Jerusalem, 91904, Israel}
\email{samouil.molcho@math.huji.ac.il}

\author[M. Temkin]{Michael Temkin}
\address{Einstein Institute of Mathematics\\
               The Hebrew University of Jerusalem\\
                Giv'at Ram, Jerusalem, 91904, Israel}
\email{temkin@math.huji.ac.il}

\thanks{This research is supported by ERC Consolidator Grant 770922 - BirNonArchGeom}
\thanks{The project has received funding from the European Research Council (ERC) under the European Union Horizon 2020 research and innovation program (grant agreement No.786580).}
\thanks{We would like to thank the anonymous referee for valuable comments.}

\date{\today}

\bibliographystyle{amsalpha}
\maketitle

\begin{abstract}
We consider the stack $\L X$ parametrizing log schemes over a log scheme $X$, and weak and strong properties of log morphisms via $\L X$, as defined by Olsson. We give a concrete combinatorial presentation of $\L X$, and prove a simple criterion of when weak and strong properties of log morphisms coincide. We then apply this result to the study of logarithmic regularity, derive its main properties, and give a chart criterion analogous to Kato's chart criterion of logarithmic smoothness.
\end{abstract}

\section{Introduction}

\subsection{Olsson's approach to log geometry}
In \cite{Ostack}, Olsson defined and studied the stack $\L X$ of logarithmic schemes over a given fine logarithmic scheme $X$: Objects of $\L X$ are fine log schemes over $X$ and morphisms between two fine log schemes are morphisms over $X$ that are strict, that is, which preserve the log structure. One of Olsson's key insights was to apply the construction of $\L X$ to the study of morphisms of logarithmic schemes. According to Olsson, a morphism $f:X \rightarrow Y$ of fine log schemes should have a ``log'' property $\c{P}$ if the associated morphism $\L f : \L X \rightarrow \L Y$ has the same property $\c{P}$. There is however a second natural possible definition, in which one requests that the composed morphism $X \rightarrow \L X \rightarrow \L Y$ has property $\c{P}$. Morphisms $f$ that satisfy this second definition are said to satisfy ``weak log'' property $\c{P}$. In general, the two notions do not coincide, and each has its distinct set of advantages. The strong property has better functorial behavior, but is much harder to work with explicitly. Olsson showed in \cite{Ostack} that the strong and weak properties coincide for three important classes of morphisms, namely log \'etale, log smooth, and log flat morphisms. All three classes had been introduced before by Kato, the first two in \cite{Klog}, and log flatness in an unpublished article. The definitions differ in nature however. Log \'etaleness and log smoothness were defined intrinsically, via a logarithmic analogue of the classical infinitesimal lifting property that defines \'etale and smooth morphisms. Then, Kato proved an extremely useful ``chart'' criterion which characterizes the local behavior of log smooth and \'etale maps. On the other hand, logarithmic flatness did not possess an intrinsic definition, and was defined by using a modification of the chart criterion as its definition. Olsson's approach thus unified the three classes of morphisms under the same principle, and provided evidence that his definition is the ``correct'' one, at least when a weak and strong log property coincide.

\subsection{Main results}
Olsson's definition of $\L X$, and the derivations of its main properties, are rather abstract -- this is also one of the reasons that the morphism $\L X \rightarrow \L Y$ can be difficult to work with. In this paper, we provide a simplicial presentation of $\L X$. This presentation is in terms of concrete combinatorial objects, constructed from the fan of the log scheme $X$. More precisely, when $X$ posesses a global chart, we construct a stack $\c{L}_X$ which is entirely combinatorial in nature from this chart, and show

\newtheorem*{theorem: logXth}{Theorem \ref{logXth}}
\begin{theorem: logXth}
If $X$ is a log scheme possessing a global chart, $\cL_X$ is canonically equivalent to $\L X$.
\end{theorem: logXth}

We then show how to globalize this construction to an arbitrary $X$ -- but the upshot is that the local structure of $\L X$ is made completely explicit, and gives some intuition, if only to the authors. We then proceed to use the local structure of $\L X$ to give a simple criterion for comparing strong and weak log properties. Explicitly, we show

\newtheorem*{theorem: logPiffwlogP}{Theorem \ref{theorem: logPiffwlogP}}
\begin{theorem: logPiffwlogP}
Suppose $\c P$ is a property of representable morphisms of stacks which is stable under pullbacks and can be checked \'etale locally on the source and flat locally on the target. Then log $\c P$ and weak log $\c P$ are equivalent.
\end{theorem: logPiffwlogP}

This brings us to our main motivation for writing this paper: the notion of logarithmic regularity. This notion is extensively used in \cite{relative} in constructing a theory of resolution of morphisms and generalizes the usual logarithmic smoothness to the case of morphisms not of finite type. Until now, only the absolute notion of logarithmic regularity has been studied, that is to say, what it means for a scheme to be logarithmically regular. This was done  in \cite{Ktor} for fine saturated log schemes and in \cite{GabberRamero} for fine log schemes. Olsson's approach provides two natural candidates for a definition, and given theorem \ref{theorem: logPiffwlogP}, we can immediately deduce that the two possible definitions coincide. From this, in section \ref{section: logreg} we deduce that the notion of log regularity has all the basic properties one might expect. Finally, we prove a chart criterion for log regular morphisms, analogous to Kato's chart criterion for log smooth morphisms.

\newtheorem*{theorem: Chart Criterion}{Theorem \ref{theorem: Chart Criterion}}
\begin{theorem: Chart Criterion}
A map $f:X \rightarrow Y$ of locally Noetherian log schemes is log regular if and only if, for any fppf chart $Q \rightarrow M_Y$, \'etale locally around any point $x$ in $X$, there exists an injective chart $Q \rightarrow P$ for the morphism $f$ such the torsion part of the cokernel of $Q^{\rm{gp}} \rightarrow P^{\rm{gp}}$ has order invertible at $x$, and such that the morphism $X \rightarrow Y\times_{\Spec \ZZ[Q]}\Spec \ZZ[P]$ is regular.
\end{theorem: Chart Criterion}

We in fact formulate and prove a slightly sharper version in \ref{section: Chart Criterion}. From this chart criterion, we can easily deduce that this relative notion of logarithmic regularity extends the absolute notions in the literature.

\subsection{Conventions and notations}\label{convsec}
In this paper we denote log schemes by single letters such as $X,Y,Z$ etc. We denote the underlying scheme of $X$ by $\u X$ and the sheaf of monoids by $M_{X}$. We also denote the structure sheaf of $\u X$ as $\c{O}_X$ instead of $\c{O}_{\u{X}}$. Similarily, the morphism underlying a morphism $f:X \rightarrow Y$ of log schemes is denoted by $\u f$; the morphism of sheaves of monoids is usually denoted by $f^\flat$ or simply as an arrow $f^*M_Y \rightarrow M_X$. The group of units of a monoid $M$ is denoted by $M^*$, and the quotient $M/M^*$, called the sharpening of $M$, by $\o{M}$. A monoid $M$ is called sharp if $M = \o{M}$. We call the sharpening $\o M_X = M_X/\mathcal{O}_X^*$ of a log structure the ``characteristic monoid'' of $X$, and $M_{X/Y} = \o M_{X/Y} = M_X/f^*M_Y$ the relative characteristic monoid to conform with the literature.

\section{Fans and toric stacks}

\subsection{Fans}
We will always work with fine monoids.

\subsubsection{Recollections}
Recall that to any fine monoid $P$ one can associate an {\em affine fan} $F=\Spec(P)$, whose points are prime ideals, the Zariski topology is generated by {\em face embeddings} $\Spec(P[-m])\into\Spec(P)$ and the stalk of the structure sheaf $M_F$ at $p$ is the sharpened localization of $M$ at $p$. Clearly, $\Spec(F)$ depends only on the {\em sharpening} $\oP= P/P^\times$. By a {\em fan} or {\em Kato fan} we mean a monoidal space $(X,M_X)$, which is locally isomorphic to an affine fan. A fan is called {\em saturated} if it is covered by $\Spec(P_i)$ with fs $P_i$, and the theory of saturated fans is equivalent to the classical theory of rational polyhedral (or toric) fans.

\subsubsection{\'Etale morphisms}
An {\em \'etale morphism} of fans is just a local isomorphism, so we will later introduce stacks using the usual Zariski topology on the category $\AF$ of affine fans.

\subsubsection{Generalized cone complexes}
In the recent work \cite{ACP}, Abramovich, Caporaso and Payne constructed a tropical skeleton of the moduli space of curves $\oM_{g,n}$ analogous to skeletons of toroidal varieties. Not surprisingly, this construction required to introduce an analogue of stacks in the category of toric fans, that they called generalized cone complexes. Informally, these are colimits of diagrams of affine saturated fans with all maps being face embeddings. If the diagram is a poset, then one ends up with a usual fan. In the case of toric fans, the theory can be developed using topological realizations and their actual colimits.

\begin{example}
\label{satuniversalfan}
Taking $\cL_\sat$ to be the colimit of the diagram of all affine toric fans and all face embeddings between them one obtains a huge generalized complex such that any toric fan $F$ possesses a unique \'etale morphism $F\to\cL_\sat$. In particular, $\cL_\sat$ is the 2-final object in the 2-category of stacky fans with \'etale morphisms. The same construction applies to the category of all affine fans over a fixed fan $X$, and the obtained object will be denoted $\cL_{X,\sat}$.
\end{example}

\subsubsection{Stacky fans}
We would like to extend the above example to the category of all fans. First, instead of generalized complexes we will work with stacky fans, which are simple analogues of algebraic stacks. A stack $\cX$ on the category $\AF$ with Zariski topology is called a {\em stacky fan} if its diagonal is representable and there is an \'etale cover $X_0\to\cX$ with $X_0$ a functor represented by a fan. As usually, setting $X_{n+1}=X_n\times_\cX\cX_0$ one obtains an \'etale groupoid in fans, and the 2-category of stacky fans can be also described (up to an equivalence) as the 2-category of \'etale groupoids in fans. This definition of stacky fans has appeared and been studied in \cite{UliAF}.

\begin{remark}
Providing the full simplicial fan $X_\bullet$ is a matter of taste. It is completely determined by the usual groupoid datum $X_1\rightrightarrows X_0$, $m\:X_2\to X_1$, etc. We prefer to write $X_\bullet$ instead of $X_1\rightrightarrows X_0$ because stating associativity of $m$ requires to provide some information up to level $X_3$.
\end{remark}

\subsubsection{Joint faces and join fans}
Once the necessary framework has been fixed, we can try to construct the universal $X$-fan $\cL_X$ in the category of fans with \'etale morphisms. Instead of trying to define it as a (homotopy?) colimit, we will simply write down an explicit groupoid presentation. By a {\em joint face} or simply a {\em face} of affine fans $X_i=\Spec(Q_i)$, $0\le i\le n$ we mean an isomorphism class of diagrams $h_i\:F\into X_i$, where each $h_i$ is a face embedding. Clearly, all joint faces form a poset that we denote $\cF(Q_0\.Q_n)$, and hence the colimit of all faces $F$ over the diagram $\cF(Q_0\.Q_n)$ is a {\em join fan}, that will be denoted $J(Q_0\.Q_n)$. In particular, $J(Q)=\Spec(Q)$.

\begin{lemma}\label{facelem}
If $Q_0\.Q_n$ are affine fans and $0\le l\le n$, then $$J(Q_0\.Q_n)=J(Q_0\.Q_l)\times_{\Spec(Q_l)}J(Q_l\.Q_n).$$
\end{lemma}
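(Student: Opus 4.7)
The plan is to build a canonical comparison morphism $\phi$ from the left-hand side to the right-hand side and verify it is an isomorphism by inspecting underlying points and stalks. The first step is combinatorial: unpacking definitions, specifying a joint face $h_i\:F\into\Spec(Q_i)$ for $0\le i\le n$ is the same as specifying the two restricted joint faces of $Q_0\.Q_l$ and $Q_l\.Q_n$, together with the compatibility that their $l$-th face embeddings into $\Spec(Q_l)$ literally coincide. This yields a canonical isomorphism of indexing posets
$$\cF(Q_0\.Q_n) \;\cong\; \cF(Q_0\.Q_l) \times_{\cF(Q_l)} \cF(Q_l\.Q_n).$$

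Restricting joint faces to each sub-range is functorial in $\cF$ and, upon passing to colimits, induces morphisms $J(Q_0\.Q_n) \to J(Q_0\.Q_l)$ and $J(Q_0\.Q_n) \to J(Q_l\.Q_n)$ whose compositions to $\Spec(Q_l)$ coincide. The universal property of the fiber product then produces
$$\phi \: J(Q_0\.Q_n) \;\to\; J(Q_0\.Q_l) \times_{\Spec(Q_l)} J(Q_l\.Q_n).$$
To show $\phi$ is an isomorphism of fans, I check it is bijective on points and induces isomorphisms on stalks. Points of a join fan $J(Q_0\.Q_m)$ are in bijection with joint faces (each joint face $G$ contributing its unique cone point), and the stalk at the point corresponding to $G=\Spec(R)$ with $R$ sharp is $R$. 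A point of the target is a pair $(G', G'')$ of joint faces of $Q_0\.Q_l$ and $Q_l\.Q_n$ whose images in $\Spec(Q_l)$ coincide as points; since each projection factors through a face embedding, this coincidence forces $G'$ and $G''$ to literally agree as sub-fans of $\Spec(Q_l)$, giving a common sharp monoid $R$. By the poset identification above, this compatible pair assembles to a unique joint face of $Q_0\.Q_n$ with underlying fan $\Spec(R)$, giving the bijection on points. The stalk of the fiber product at $(G', G'')$ is the sharpened pushout $R \oplus_R R = R$, matching the stalk on the left.

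The principal subtlety is the identification of points on the fiber-product side with joint faces of $Q_0\.Q_n$: a priori one might fear pairs $(G', G'')$ whose images in $\Spec(Q_l)$ only agree up to isomorphism rather than as honest sub-fans. The face-embedding (open-immersion) nature of each projection to $\Spec(Q_l)$ rules this out, since two face embeddings with the same image point must literally coincide as embedded sub-fans. With this observation the stalk computation trivializes to a pushout along an identity, and $\phi$ is an isomorphism.
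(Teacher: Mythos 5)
Your proof is correct and rests on the same combinatorial observation as the paper's: a joint face of $Q_0\.Q_n$ is the same datum as a pair of joint faces of the two sub-tuples agreeing over $\Spec(Q_l)$. The execution differs slightly: the paper commutes the fiber product through the colimit, writing the right-hand side as a colimit over the \emph{full} product poset of the intersections $F_1\times_{\Spec(Q_l)}F_2$ (each of which is again a joint face, and every joint face occurs at least once), whereas you restrict to the fiber-product poset of compatible pairs and then verify your comparison map on points and stalks. Both routes are valid; yours is a bit more laborious but makes explicit the facts the paper leaves implicit, namely that all maps in sight are local isomorphisms, so the fiber product is computed pointwise, a face embedding is determined by the image of its closed point, and a bijective \'etale comparison map is automatically an isomorphism.
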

\begin{proof}
Set $X_i=\Spec(Q_i)$. The righthand side is a colimit over the product poset of faces $F=F_1\times_{X_i}F_2$, where $F_1$ is a joint face of $X_0\.X_l$ and $F_2$ is a joint face of $X_l\.X_n$. Since $F$ is a joint face of $X_0\.X_n$ and any joint face of $X_0\.X_n$ is obtained at least once as $F_1\times_{X_l}F_2$, we obtain the required isomorphism.
\end{proof}

\subsubsection{The universal fan}
Now let $X$ be a fan. For any $n$ let $L_n=\coprod_QJ(Q_0\.Q_n)$ be the disjoint union of joins of all $n+1$-tuples of affine fans $\Spec(Q_i)$ over $X$. The natural restriction and degeneracy maps from $J(Q_0\.Q_n)$ to the joins $J(Q_0\.Q_{i-1},Q_{i+1}\.Q_n)$ and $J(Q_0\.Q_{i}, Q_{i}\.Q_n)$, respectively, give rise to a simplicial fan $L_\bullet$ over $X$.

\begin{theorem}\label{universalfan}
Let $X$ be a fan. The associated simplicial fan $L_\bullet$ is a groupoid and the quotient $\cL_X$ is a stacky fan, which is the 2-final object in the 2-category $\mathcal{F}_X$ of stacky fans over $X$ with \'etale moprhisms. 
\end{theorem}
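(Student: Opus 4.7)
My plan is to establish three assertions: that $L_\bullet$ is a groupoid (Segal conditions plus invertibility), that the source and target maps are \'etale so that the quotient $\cL_X$ is a stacky fan, and that $\cL_X$ is 2-final in the 2-category of stacky fans \'etale over $X$.

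For the groupoid property, the Segal identification $L_n \cong L_1 \times_{L_0} \cdots \times_{L_0} L_1$ (with $n$ factors of $L_1$) follows by iterating Lemma \ref{facelem}: decomposing $J(Q_0\.Q_n) \cong J(Q_0,Q_1) \times_{\Spec(Q_1)} J(Q_1\.Q_n)$ and repeatedly factoring the second term gives the identification after taking disjoint unions over all tuples of affine fans over $X$. Invertibility comes from the tautological involution on $\cF(Q_0, Q_1)$ that exchanges the two face embeddings of a joint face; it yields an automorphism $\iota\: L_1 \to L_1$ swapping source and target and serves as the groupoid inverse.

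For the stacky fan structure, each component of the projection $J(Q_0, Q_1) \to \Spec(Q_0)$ is a face embedding $F \into \Spec(Q_0)$, hence a local isomorphism; summing over all tuples, the source and target maps $L_1 \to L_0$ are \'etale. Thus $L_\bullet$ is an \'etale groupoid in fans, and its quotient $\cL_X$ is a stacky fan with atlas $L_0 \to \cL_X$.

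For 2-finality, given a stacky fan $\cY$ \'etale over $X$, choose an \'etale fan atlas $Y_0 \to \cY$ and cover $Y_0$ by affine opens $\Spec(Q_\alpha)$ that are \'etale over $X$. Each inclusion provides a tautological component of $L_0$; on pairwise intersections, the joint faces covering $\Spec(Q_\alpha) \cap \Spec(Q_\beta) \into Y_0$ determine elements of $J(Q_\alpha, Q_\beta) \subset L_1$, and the Segal decomposition of $L_2$ supplies the cocycle condition on triple overlaps. This glues to an \'etale morphism $\cY \to \cL_X$ over $X$, and any two such morphisms agree on each affine chart by construction, giving essential uniqueness. The main obstacle I foresee is the bookkeeping: verifying that the construction does not depend on the choice of atlas or affine covering, and that the cocycle data on triple intersections is consistent. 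Both reduce to elementary but careful checks using Lemma \ref{facelem}.
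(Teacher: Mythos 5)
Your treatment of the groupoid structure and of the atlas is essentially the paper's: the Segal identification comes from iterating Lemma \ref{facelem}, and \'etaleness of $s,t$ from the fact that $J(Q_0,Q_1)\to\Spec(Q_i)$ is a disjoint union of face embeddings. (The paper verifies associativity via the coincidence of the two compositions $J(Q_0,Q_1,Q_2,Q_3)\to J(Q_0,Q_3)$ rather than inverses via the swap involution; both checks are equally routine.) The finality argument, however, has two genuine problems. First, you have narrowed the statement: the objects of the 2-category are \emph{arbitrary} stacky fans over $X$ --- only the 1-morphisms are required to be \'etale --- and $L_0$ is the disjoint union of \emph{all} affine fans over $X$, not just those \'etale over $X$. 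For a $\cY$ whose structure morphism to $X$ is not \'etale (e.g.\ $\Spec(\NN^2)\to\Spec(\NN)$), your covering of $Y_0$ by affine opens ``\'etale over $X$'' does not exist, so the construction never starts. The fix is simply to drop that condition: every affine open of $Y_0$ is tautologically a component of $L_0$.

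Second, and more seriously, you only prove half of 2-finality. Your gluing produces \emph{one} \'etale morphism $\cY\to\cL_X$, and your uniqueness claim (``any two such morphisms agree on each affine chart by construction'') only compares morphisms that arise from your recipe. To get 2-finality you must show that an \emph{arbitrary} \'etale morphism $F\to\cL_X$ from an affine fan coincides with the tautological one up to unique 2-isomorphism. The paper does this in two steps that are missing from your sketch: (i) any morphism from an affine fan lifts through \'etale morphisms of fans and hence of stacky fans, so any \'etale $F\to\cL_X$ factors through the atlas as a face embedding $F\into\Spec(Q_0)$; and (ii) for any two \'etale morphisms $f,g\:F\to L_0$, i.e.\ face embeddings into $\Spec(Q_0)$ and $\Spec(Q_1)$, there is a \emph{unique} $h\:F\to L_1$ with $s\circ h=f$ and $t\circ h=g$, namely the joint face $F\into J(Q_0,Q_1)$. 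Point (ii) is also exactly what makes your cocycle condition on triple overlaps automatic, so isolating it would both close the uniqueness gap and dispose of the ``bookkeeping'' you defer at the end.
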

\begin{proof}
It follows from Lemma~\ref{facelem} that $L_n=L_1\times_{s,L_0,t}\dots\times_{s,L_0,t}L_1$, where $s,t$ are the two maps $L_1\to L_0$. To conclude that $L_\bullet$ is a groupoid it remains to check the associativity constraint, which follows from the fact that the compositions $J(Q_0,Q_1,Q_2,Q_3)\to J(Q_0,Q_1,Q_3)\to J(Q_0,Q_3)$ and $J(Q_0,Q_1,Q_2,Q_3)\to J(Q_0,Q_2,Q_3)\to J(Q_0,Q_3)$ coincide.

It remains to prove that any affine fan $F$ over $X$ admits a unique \'etale morphism $f\:F\to\cL_X$, in particular, $f$ has no automorphisms. The groupoid $L_\bullet$ is \'etale, hence $L_0\to\cL_X$ is \'etale. Any morphism from an affine fan lifts through \'etale morphisms of fans, hence also of stacky fans. In particular, any \'etale morphism $F\to\cL_X$ factors through an \'etale morphism $F\to L_0$. So, we should prove that for any pair of \'etale morphisms $f,g\:F\to L_0$ there exists a unique morphism $h\:F\to L_1$ such that $s\circ h=f$ and $t\circ h=g$. It remains to note that $f$ and $g$ correspond to face embeddings $F\into\Spec(Q_0)$ and $F\into\Spec(Q_1)$, and the unique $h$ is the joint face $F\into J(Q_0,Q_1)$.
\end{proof}

\begin{remark}
Even though the fan $X$ itself belongs to the category $\mathcal{F}_X$, it is not its terminal object, as the structure map $F \rightarrow X$ from an arbitrary stacky fan $F$ is not necessarily \'etale. The stacky fan $\cL_X$ is enormous; it is the analogue of the cone complex $\mathcal{L}_{\textup{sat}}$ of example \ref{satuniversalfan} in the category of all fine (rather than fs) monoids over $X$. In fact, $\mathcal{L}_\textup{sat}$ is the generalized cone complex corresponding to the saturation of $\cL_X$. 
The stacky fan $\cL_X$ is the combinatorial analogue of Olsson's stack $\L X$ discussed in the next section. This analogy will be made precise in section \ref{section: logproperties}.
\end{remark}

\subsection{Relative toric stacks}\label{reltoricsec}
The goal of \S\ref{reltoricsec} is to straightforwardly extend the theory of toric stacks to the relative case over a log scheme $X$ provided with a global chart $X\to\Spec(\ZZ[P])$. By $P\to Q$ we always denote a homomorphism of fine monoids.

\subsubsection{The affine case}
Given a monoid $P$ consider the absolute toric scheme $Z_P=\Spec(\ZZ[P])$ with the corresponding torus $T_P=Z_{P^\gp}$ and toric stack $\cZ_P=[Z_P/T_P]$. Then we introduce the corresponding {\em relative toric $X$-scheme} $X_P[Q]=X\times_{Z_P}Z_Q$ and the {\em relative toric $X$-stack} $\cX_P[Q]=X\times_{\cZ_P}\cZ_Q$.

\subsubsection{Basic properties}
The following facts about toric stacks are very easy, so we skip the justification:

(1) $\cX_P[Q]$ is the quotient of $X_P[Q]$ by the $X$-torus $X\times T_{Q/P}$, where $T_{Q/P}:=T_Q/T_P$.

(2) $\cX_P[Q]$ depends only on the sharpenings $X\to Z_\oP$ and $\oP\to\oQ$.

(3) For any localization $R=Q[-m]$, the homomorphism $\cX_P[\oR]=\cX_P[R]\to\cX_P[Q]$ is an open immersion. We call it a {\em face map} because it corresponds to the face embedding $\Spec(\oR)\into\Spec(Q)$.

\subsubsection{The log structure}
For brevity, set $\cX=\cX_P[Q]$, $X_0=X_P[Q]$ and $X_1=X_0\times_\cX X_0=X_0\times T_{Q/P}$. It is easy to see that the two pullbacks of the log structure on $X_0$ with respect to the projection and the action maps $X_1\rightrightarrows X_0$ coincide; in particular, we can safely view $X_1$ as a log scheme, and the log structure descends to a log structure $M_\cX$ on $\cX$. Informally, the latter can be viewed as an avatar of the sharpening $\oM_{X_0}= M_{X_0}/\mathcal{O}_{X_0}^\times$. In particular, this is indicated by the following result (see also \cite[Proposition 5.17]{Ostack} and \cite[Lemma~6.3]{CCUW}).

\begin{lemma}\label{toriclem}
Keep notation of \S\ref{reltoricsec}. Then the log scheme $X_P[Q]$ and the log stack $\cX_P[Q]$ represent the functors on the category of log schemes over $X$, which send $S$ to the homomorphisms of monoids

\begin{align*}
\Hom_P{(Q, \Gamma(S,M_S))} \ \ \ {\rm and} \ \ \ \Hom_P{(Q, \Gamma(S,\o{M}_S))}.
\end{align*}
\end{lemma}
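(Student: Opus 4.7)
My plan is to handle the scheme statement first and then bootstrap to the stack statement via the torus quotient presentation $\cX_P[Q] = [X_P[Q]/T_{Q/P}]$ from property (1).

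For the scheme $X_P[Q] = X \times_{Z_P} Z_Q$, the universal property of the fiber product says that a log morphism $S \to X_P[Q]$ over $X$ is a log morphism $S \to Z_Q$ matching the given composite $S \to X \to Z_P$. The log scheme $Z_Q$ carries the canonical fine log structure from its tautological chart $Q \to \ZZ[Q]$, and the universal property of this chart (which can be extracted from \cite[Lemma~6.3]{CCUW} or proved directly from the definition of an associated log structure) identifies log morphisms $S \to Z_Q$ with monoid homomorphisms $Q \to \Gamma(S, M_S)$. Compatibility with the map to $Z_P$ is exactly the condition that this homomorphism restricts on $P$ to the given chart of $X$, i.e.\ $P$-equivariance, which gives the first identification.

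For the stack $\cX_P[Q]$, a morphism $S \to \cX_P[Q] = [X_P[Q]/T_{Q/P}]$ is a $T_{Q/P}$-torsor $\widetilde S \to S$ together with a $T_{Q/P}$-equivariant log morphism $\widetilde S \to X_P[Q]$. By the scheme case, the latter is a $P$-homomorphism $\widetilde\varphi \colon Q \to \Gamma(\widetilde S, M_{\widetilde S})$, and $T_{Q/P}$-equivariance says that translating by $t \in T_{Q/P}$ multiplies $\widetilde\varphi(q)$ by the character $\chi_q(t) \in \cO_{\widetilde S}^*$ attached to the class of $q$ in $Q^{\gp}/P^{\gp}$. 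Composing with the sharpening $M_{\widetilde S} \to \o M_{\widetilde S}$ kills the unit ambiguity, so $\widetilde\varphi$ becomes $T_{Q/P}$-invariant after sharpening and descends along the torsor to a $P$-homomorphism $\varphi \colon Q \to \Gamma(S, \o M_S)$. Conversely, such a $\varphi$ determines the $T_{Q/P}$-torsor of liftings to $M_S$ together with its tautological chart, reconstructing the map to $\cX_P[Q]$.

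The main obstacle is the descent step in the stack case: verifying that $T_{Q/P}$-equivariant $P$-charts into $M_{\widetilde S}$ are the same data as $P$-homomorphisms into $\o M_S$. This is a relative version of Olsson's description \cite[Proposition~5.17]{Ostack} of $\cZ_Q$, and it is enabled by the observation in the paragraph preceding the lemma that the two pullbacks of $M_{X_0}$ along $X_1 \rightrightarrows X_0$ coincide, so the sharpened log structure descends through the groupoid and the correspondence is genuinely functorial.
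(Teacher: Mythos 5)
Your overall strategy coincides with the paper's: the scheme case is the universal property of the fiber product combined with the tautological chart of $Z_Q$, and the stack case is handled by passing to the presentation $\cX_P[Q]=[X_P[Q]/T_{Q/P}]$ and comparing with homomorphisms into the sharpened monoid. The forward direction of your torsor argument (an equivariant chart on $\widetilde S$ sharpens to a $T_{Q/P}$-invariant homomorphism and descends to $Q\to\Gamma(S,\oM_S)$) is fine, and matches the paper's observation that the quotient of $\Hom_P(Q,\Gamma(M_S))$ by the free action of $\Hom_P(Q,\Gamma(M_S^\times))$ admits a fully faithful map to $\Hom_P(Q,\Gamma(\oM_S))$. (You should also record why the groupoid of such torsors is equivalent to a set -- freeness of the action kills automorphisms -- since the lemma asserts representability of a set-valued functor.)

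The gap is in the converse. You assert that a homomorphism $\varphi\colon Q\to\Gamma(S,\oM_S)$ ``determines the $T_{Q/P}$-torsor of liftings to $M_S$,'' but a priori the sheaf of liftings of $\varphi$ along $M_S\to\oM_S$ is only a pseudo-torsor under the sheaf of homomorphisms $Q^\gp/P^\gp\to\cO_S^\times$: to know it is a torsor one must prove it is locally non-empty, i.e.\ that $\varphi$ lifts to $M_S$ locally in the flat topology. This is precisely the essential-surjectivity step the paper isolates and delegates to \cite[Corollary~2.3]{Ostack}, and it is a genuine input: since $Q$ is only fine, $Q^\gp$ may have torsion, so lifting a homomorphism (as opposed to lifting generators one at a time) requires extracting roots of units, which is possible fppf-locally but not in general \'etale-locally -- this is also why the statement must be read in the flat topology. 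The ingredient you invoke instead, namely that the two pullbacks of $M_{X_0}$ to $X_1$ agree so the log structure descends to the quotient stack, only shows that $\cX_P[Q]$ carries a log structure and hence that the comparison map is defined and fully faithful; it does not produce the local lifting. With a citation of Olsson's Corollary~2.3 (or a direct proof of flat-local liftability) inserted at that point, your argument closes and is essentially the paper's.
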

\begin{proof}
The first claim is obvious, and it implies that the functor represented by $\cX_P[Q]=[X_P[Q]/X_P[Q^\gp]]$ is the quotient of $\Hom_P(Q, \Gamma(M_S))$ by $$\Hom_P(Q^\gp, \Gamma(M_S))=\Hom_P(Q, \Gamma(M^\times_S)).$$ Note that the quotient is a set (or a groupoid equivalent to a set) because the action is free. In addition, it admits a natural fully faithful functor to $\Hom_P(Q, \Gamma(\oM_S))$, and it remains to prove that it is essentially surjective. In other words, we want to show that with respect to the flat topology, any homomorphism $Q\to\oM_S$ factors through $M_S$. This is done in \cite[Corollary~2.3]{Ostack}.
\end{proof}

\begin{remark}
In particular, morphisms from a log scheme to $\cX_P[Q]$ have no non-trivial automorphisms, so $\cX_P[Q]$ behaves as an algebraic space in the logarithmic category (see also \cite[Remark~6.4]{CCUW}). Technically, upgrading morphisms of schemes to morphisms of log schemes has this rigidifying effect because one gets rid of automorphisms of the log structures -- they do not correspond to automorphisms of the corresponding log schemes.
\end{remark}

\subsubsection{Globalization to fans}
Finally, we want to globalize the construction of $\cX_P[Q]$ to the case of a stacky fan $F$ over $P$. Intuitively this can be viewed as a combinatorial base change or pullback functor; in fact, in \cite[Definition~6.6]{CCUW}, the notation $a^*F$ is used. We prefer a more ad hoc approach that acts in two stages.

The contravariant functor $Q\mapsto\cX_P[Q]$ on the category of $P$-monoids can be also viewed as a covariant functor on the category of affine fans $F=\Spec(Q)$ over $P$, and we will use the same notation $F\mapsto\cX_P[F]$. This functor is compatible with fiber products (in fact, all finite limits) and takes open immersions to open immersions (face embeddings). Therefore, it globalizes and one obtains a functor $F\mapsto\cX_P[F]$ that associates to a fan $F$ over $P$ an $X$-stack $\cX_P[F]$ glued from open substacks $\cX_P[F_i]$, where $F=\cup_i F_i$ is an open affine cover of $F$. Clearly, this functor also respects finite limits.

\subsubsection{Globalization to stacky fans}
For any stacky fan $\cF$ choose a groupoid in fans $F_\bullet$ presenting it, and consider the simplicial $X$-stack $\cX_P[F_\bullet]$ whose $n$-th term is $\cX_P[F_n]$. Compatibility with fiber products implies that $\cX_P[F_\bullet]$ is a groupoid in a naive sense (rather than 2-categorically), because, being open immersions, all maps in $\cX_P[F_\bullet]$ have no automorphisms. The groupoid $\cX_P[F_\bullet]$ is thus \emph{inert} -- recall from \cite{ATWdestackification} that a map $f:X \rightarrow Y$ of stacks is called inert if the induced map $I_X \rightarrow f^*I_Y$ of inertia groups is an isomorphism, and a simplicial stack is inert if all maps appearing in its presentation are inert. By \cite[Lemma~2.1.4]{ATWdestackification} there exists a quotient stack $\cX=[\cX_P[F_\bullet]]$ of this groupoid, which is an honest algebraic stack provided with a morphism $\cX_P[F_0]\to\cX$ such that $\cX_P[F_{n+1}]=\cX_P[F_n]\times_\cX\cX_P[F_0]$. Moreover, any other presentation $F'_\bullet$ of $\cF$ is a groupoid equivalent to $F_\bullet$, hence $\cX_P[F_\bullet]$ and $\cX_P[F'_\bullet]$ are equivalent and have equivalent quotients $\cX$ and $\cX'$. This shows that $\cX_P[\cF]:=\cX$ defines an honest 2-functor from the 2-category of stacky fans over $P$ to the 2-category of stacks over $X$. It is compatible with fiber products.

\begin{example}\label{groupoidex}
(i) For $P$-monoids $Q_i$ set $\cX_P[Q_0\.Q_n]=\cX_P[J(Q_0\.Q_n)]$. It is the colimit of joint faces $\cX_P[F]$ in $\cX_P[Q_i]$.

(ii) We will use the notation $\cL_X=\cX_P[\cL_P]$, and we will later see that it depends only on the log scheme $X$. This stack is the quotient of the inert groupoid $\cX_\bullet$, where $\cX_n=\coprod_Q\cX_P[Q_0\.Q_n]$.

(iii) Assume that $X$ is a log point with $\oM_X=P$, say $\uX=\Spec(k)$ and the log structure is induced from $P\stackrel{0}\to k$. Then the natural maps $\cX_P[Q_0\.Q_n]\to J(Q_0\.Q_n)$ are homeomorphisms, and hence the underlying topological spaces $|\cL_X|$ and $|\cL_P|$ are homeomorphic. Equivalently, points of $|\cL_X|$ are parameterized by isomorphism classes of affine $P$-fans $\Spec(Q)$, and the generization relations correspond to sharpened localizations $Q\to Q'$ of fine $P$-monoids.
\end{example}





\section{Olsson's stack $\L X$}\label{section: logproperties}
In this section we construct a very concrete combinatorial presentations of Olsson's stack $\L X$. In particular, we will show that if $X$ possesses a global chart, then the natural morphism $\cL_X\to\L X$ is an equivalence. This will done without using any result about $\L X$, so as a by product we obtain a new proof of some properties of $\L X$.

\subsection{Recollections on the functor $\L{}$}\label{subsection: Olssonstack}
We start with reminding the reader of some of Olsson's definitions and results from \cite{Ostack}.

\subsubsection{The definition}
Let $X$ be a fine logarithmic scheme. In \cite{Ostack}, Olsson defined the stack $\L X$ parameterizing fine log schemes over $X$. It is the category fibered in groupoids over the category of $X$-schemes, whose objects are fine $X$-log schemes $Y \rightarrow X$ and morphisms are strict morphisms of $X$-log schemes. The fibration morphism simply forgets the log structures. In addition, the identity morphism of $X$ induces a tautological morphism $i_X\:X\to\L X$.

\subsubsection{Basic properties}
Olsson proved the following properties of the above construction.

\begin{theorem}\label{basicth}
(i) For any fine log scheme $X$, the stack $\L X$ is algebraic and of locally finite presentation over $X$, and its stabilizers are extensions of \'etale groups by diagonalizable ones.

(ii) The 2-functor $\L{}$ respects fibered products: if $T=Y\times_XZ$, then the natural morphism $\L T\to\L Y\times_{\L X}\L Z$ is an equivalence.

(iii) For any fine log scheme $X$, the morphism $i_X$ is an open immersion, which represents the substack of $\L X$ classifying $X$-log schemes $Y$ such that the morphism $Y\to X$ is strict. For any strict morphism of fine schemes $f\:Y\to X$, one has that $i_Y$ is the base change of $i_X$ with respect to $\L f$.
\end{theorem}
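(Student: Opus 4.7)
My plan is to handle the three parts in order of increasing difficulty, doing (ii) and (iii) formally and then constructing an explicit atlas for (i).

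For (ii), the argument is essentially unwinding the definitions. A point of $\L T$ over a scheme $S$ is a fine log structure $M$ on $S$ together with a morphism of log schemes $(S, M) \to T$. Since $T = Y\times_X Z$ as log schemes, such a morphism is the same as a compatible pair of morphisms $(S, M) \to Y$ and $(S, M) \to Z$ which agree after composition to $X$; this is exactly a point of $\L Y\times_{\L X}\L Z$. The only subtlety is the recognition that the log structure on $T$ is the pushout of those of $Y$ and $Z$ over that of $X$, which is precisely the universal property of the log fiber product.

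For (iii), the key claim is that, for any morphism $f \: S \to X$ of fine log schemes, the locus on which $f$ is strict is open in $\uS$. Strictness is equivalent to triviality of the relative characteristic $\oM_{S/X}$, and since $\oM_{S/X}$ is a sheaf of finitely generated monoids, its support is closed: if $\oM_{X, \o{f(s)}} \to \oM_{S, \o s}$ is an isomorphism at a point $s$, finite generation together with compatibility with generization forces the same in an étale neighborhood. This shows $i_X$ represents an open subfunctor of $\L X$, and the base change identity $X\times_{\L X}\L Y \simeq Y$ then follows by a direct comparison of $S$-valued points.

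For (i), the hardest part, the plan is to build a smooth cover of $\L X$ by schemes and identify the stabilizers. Locally on $X$ one can choose a chart $P \to M_X$ with $P$ fine, and for every homomorphism $P \to Q$ of fine monoids the relative toric scheme $X_P[Q]$ of \S\ref{reltoricsec} comes with a tautological log structure, hence defines a morphism $X_P[Q] \to \L X$. The disjoint union $\coprod_Q X_P[Q] \to \L X$ is surjective because any fine log structure over $X$ admits strict-étale-local charts extending that of $X$. Smoothness of this cover amounts to the unobstructedness of lifting fine charts along strict infinitesimal thickenings, which reflects the log smoothness of $Z_Q$ over $\Spec\ZZ$. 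The diagonal is representable, and the stabilizer at a geometric point with characteristic $Q$ is computed as an extension of the finite group $\Aut(Q)$ of chart automorphisms (the étale part) by the torus $\Hom(Q^\gp,\mathbb{G}_m)$ of twists (the diagonalizable part). Local finite presentation then follows because each $X_P[Q]$ is of finite presentation over $X$.

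The main obstacle is part (i), specifically the smoothness of the atlas, which requires a careful deformation-theoretic analysis of fine log structures together with their charts, and some extra care when globalizing from the chart-on-$X$ case to an arbitrary fine $X$. Parts (ii) and (iii) should go through essentially formally once the openness of strictness is established.
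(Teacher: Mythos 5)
There is a genuine gap in your treatment of part (i): the atlas $\coprod_Q X_P[Q]\to\L X$ is \emph{not} smooth in general, only flat. The map factors as $X_P[Q]\to\cX_P[Q]\to\L X$, where the second map is \'etale but the first is a torsor under the diagonalizable group $T_{Q/P}=D(Q^\gp/P^\gp)$, which fails to be smooth whenever $Q^\gp/P^\gp$ has $p$-torsion in residue characteristic $p$ (such $Q$ cannot be discarded: there exist sharp fine monoids, e.g.\ the submonoid of $\ZZ\oplus\ZZ/2$ generated by $(1,0)$ and $(1,1)$, whose groupification has torsion, and the corresponding log structures genuinely carry $\mu_p$-stabilizers). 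Correspondingly, your heuristic that ``lifting fine charts along strict infinitesimal thickenings is unobstructed'' is false: lifting a chart $Q\to M_{S_0}$ through a square-zero extension amounts to lifting torsion units (e.g.\ $p$-th roots of unity), which is obstructed --- this is exactly the phenomenon behind Kato's invertible-torsion condition for log smoothness, and it is why the stabilizers in the statement are merely diagonalizable rather than tori (your $\Hom(Q^\gp,\mathbb{G}_m)$ is not a torus when $Q^\gp$ has torsion, and it should in any case be $\Hom(Q^\gp/P^\gp,\mathbb{G}_m)$). To rescue your strategy you must either invoke Artin's theorem that an fppf cover of finite presentation by a scheme (together with representable, quasi-compact diagonal, which you assert but do not prove) already implies algebraicity, or do as the paper does: cover $\L X$ \'etale-locally by the algebraic stacks $\cX_P[Q]$ themselves, exhibit $\L X$ as the quotient of the inert groupoid $\cX_\bullet$ with \'etale structure maps (Theorem \ref{logXth}), and apply the quotient lemma for inert groupoids; the diagonal and the stabilizer extension then come for free from the computation $\cX_P[Q_0]\times_{\L X}\cX_P[Q_1]=\cX_P[Q_0,Q_1]$ (Theorem \ref{toricstackth1}).

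Parts (ii) and (iii) are essentially fine. For (ii) you unwind definitions exactly as the paper (which defers to Olsson). For (iii) you take Olsson's original route --- openness of the strict locus via constructibility of $\oM_{S/X}$ --- which works but should be argued slightly more carefully (triviality of $\oM_{S/X}$ at a point propagates to a neighborhood because finitely many local generators of $\oM_{S/X}$ each vanish on an open set); the paper instead gets (iii) for free from its presentation, identifying $i_X$ with the trivial face $\cX_P[P]\into\coprod_Q\cX_P[Q]$, so that $i_X$ is \'etale by Theorem \ref{coverth} and a monomorphism because $X\times_{\L X}X=\cX_P[P,P]=X$, hence an open immersion. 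The paper's route has the advantage of requiring no separate analysis of the strict locus once the combinatorial presentation is in place.
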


Let us say a couple of words about the proofs. Claim (ii) reduces to unwinding the definitions, see \cite[Proposition~3.20]{Ostack}, and we do not have anything new to say about it. Other claims will be reproved later on in this sections. Claim (iii) was proved by a relatively simple logarithmic technique in \cite[Proposition~3.19]{Ostack}. Part (i) is the most subtle and involved part of the theorem. Its first part is \cite[Theorem~1.1]{Ostack}, while the claim about stabilizers was not specially formulated, but follows easily from \cite[Corollary~5.25]{Ostack}.

\subsubsection{Covers by toric stacks}
Olsson also constructs an \'etale cover of $\L X$ by explicit relative toric stacks.

\begin{theorem}\label{coverth}
Assume $X$ is a fine log scheme, and let $I$ be the category of triples $(U,P,Q)$, where $U\to X$ is a strict \'etale morphism, $U\to Z_P$ is  a chart, and $P\to Q$ is a homomorphism of fine monoids. Then there exists a representable \'etale covering $\coprod_I\cU_P[Q]\to\L X$, and hence also a flat covering $\coprod_I U_P[Q]\to\L X$.
\end{theorem}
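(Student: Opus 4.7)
The plan is to use the moduli interpretation of Lemma~\ref{toriclem} to define and unpack the morphism $\cU_P[Q]\to\L X$: a $T$-point of $\cU_P[Q]$ is a strict lift $T\to U$ of $T\to X$ together with a homomorphism $Q\to\Gamma(T,\oM_T)$ over the pulled-back chart $P\to\oM_U$, and its image in $\L X$ is the underlying $X$-log scheme $(T,M_T)$. Verifying the theorem then reduces to three separate checks for this morphism: representability, étaleness, and surjectivity in the étale topology.

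Representability is immediate from the moduli description. Given a $T$-point of $\L X$, the fiber product $\cU_P[Q]\times_{\L X}T$ parameterizes pairs of a strict lift $T\to U$ and a compatible homomorphism $Q\to\Gamma(T,\oM_T)$. The first datum is represented by the strict étale scheme $T\times_XU$, and for each such lift the monoid homomorphisms form a closed subscheme inside a product of finitely many copies of the sheaf $\oM_T$, so the whole fiber product is representable by an algebraic space over $T$. For étaleness I would invoke the infinitesimal criterion: along a strict square-zero thickening $T_0\into T$ the characteristic monoid $\oM_T$ is unchanged, so any chart $Q\to\oM_{T_0}$ lifts uniquely, while Lemma~\ref{toriclem} already ensures there are no nontrivial automorphisms of the lift. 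Combined with local finite presentation, this yields étaleness.

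The main difficulty is surjectivity: given a $T$-point of $\L X$, one must show that étale locally on $T$ the underlying log scheme admits a morphism to some $\cU_P[Q]$ compatible with the $\L X$-point. Strict étale locally on $X$ we may choose $U\to X$ admitting a chart $P\to M_X$; and fppf locally on $T$ the fine log structure $M_T$ admits a compatible chart $Q\to M_T$. The former yields a map $T\to U_P[Q]$, but we want to descend, étale locally, to $\cU_P[Q]$. This descent is possible because $\cU_P[Q]$ only sees the sharpened datum $Q\to\oM_T$, as made precise in the proof of Lemma~\ref{toriclem} via \cite[Corollary~2.3]{Ostack}: the unit ambiguities that force the chart to exist only fppf locally are exactly the ones killed upon passing to the torus quotient, so étale locally a map to $\cU_P[Q]$ does exist. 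The flat covering $\coprod_I U_P[Q]\to\L X$ is then an immediate consequence, obtained by composition with the smooth surjective torus torsors $U_P[Q]\to\cU_P[Q]$ under $U\times T_{Q/P}$.
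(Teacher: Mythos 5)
Your route is genuinely different from the paper's. The paper obtains Theorem~\ref{coverth} as an essentially formal corollary of the much stronger Theorem~\ref{logXth}: there $\coprod_Q\cX_P[Q]$ is exhibited as the atlas of the \'etale groupoid $\cX_\bullet$ presenting $\cL_X\simeq\L X$, so the atlas map is automatically a representable \'etale cover, and the general case is then assembled from the charted case via Lemma~\ref{descentlem}. You instead verify representability, \'etaleness and surjectivity of $\coprod_I\cU_P[Q]\to\L X$ directly. This is more economical --- you never need the joint faces $\cX_P[Q_0\.Q_n]$ or Theorem~\ref{toricstackth1} --- but it forces you to compute the fiber product $\cU_P[Q]\times_{\L X}T$ by hand, which is exactly the computation the paper packages into Theorems~\ref{toricstackth} and \ref{toricstackth1}. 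Your surjectivity argument is correct and isolates the right point: only the sharpened datum $Q\to\oM_T$ need exist \'etale locally, the unit ambiguity being absorbed by the torus quotient.

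The one step that does not work as written is the identification of the fiber product, and hence the representability argument. Lemma~\ref{toriclem} computes morphisms of \emph{log schemes} to $\cU_P[Q]$, i.e.\ the set $\Hom_P(Q,\Gamma(\oM_T))$ of \emph{all} compatible homomorphisms. A point of $\cU_P[Q]\times_{\L X}T$ is instead a morphism of \emph{schemes} $\u T'\to\cU_P[Q]$ together with an isomorphism of the induced log structure with the prescribed $M_{T'}$; unwinding, this is a homomorphism $Q\to\oM_{T'}$ that flat-locally lifts to a \emph{chart} of $M_{T'}$ --- the $(X,Q)$-structures of Theorem~\ref{toricstackth}, not arbitrary homomorphisms. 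The difference is not cosmetic: for $Q=P$ one has $\cU_P[P]=U$ and the fiber product is the strict (open) locus of $T\times_XU$ as in Theorem~\ref{basicth}(iii), whereas $\Hom_P(P,\Gamma(\oM_{T'}))$ is always a singleton and would give all of $T\times_XU$. So you must impose the chart condition and show it cuts out a representable (in fact open) subfunctor; relatedly, ``closed subscheme inside a product of copies of the sheaf $\oM_T$'' should be replaced by the observation that the resulting functor is a sheaf on the small \'etale site of $T\times_XU$, hence an algebraic space \'etale over it --- which then delivers representability, local finite presentation and \'etaleness in one stroke and makes your separate infinitesimal-criterion step unnecessary. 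With that correction the argument goes through.
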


This is proved in \cite[Corollary~5.25]{Ostack}. We will refine this result by showing that the first cover induces an actual presentation of $\L X$ by an inert groupoid in stacks, and that this groupoid is nothing else but the groupoid $\cX_\bullet$ from Example~\ref{groupoidex}.

\subsection{A modular characterization of $\cX_\bullet$}
For the sake of comparison we will need logarithmic interpretations of the functors represented by the stacks $\cX_n$ and their building blocks.

\subsubsection{The case of $\cX_P[Q]$}
We start with the simplest case. We have already described morphisms of log schemes to $\cX_P[Q]$, but studying morphisms of schemes is a finer question since non-trivial automorphisms show up. Given an $\uX$-scheme $f\:\uS\to\uX$ with a log structure $M_S$, by an {\em $(X,Q)$-structure} on $M_S$ we mean homomorphisms $f^*M_X\to M_S$ and $\gamma\:Q\to\oM_S$ such that the compositions $P\to f^*M_X\to\oM_S$ and $P\to Q\to\oM_S$ coincide and flat-locally on $\u S$ one can lift $\gamma$ to a chart $Q\to M_S$. By $\cX'_P[Q]$ we denote the category fibred over the category of $\u X$-schemes, whose fiber over $\u S$ is the groupoid of log structures $M_S$ on $S$ provided with an $(X,Q)$-structure.

\begin{remark}
Thus, $S=(\uS,M_S)$ is nothing but an $X$-log scheme such that flat-locally on $S$ the morphism $S\to X$ possesses a chart $S\to X_P[Q]$. Non-uniqueness in the choice of a chart corresponds to non-triviality of automorphism groups in $\cX'_P[Q]$.
\end{remark}

Now, we extend \cite[Proposition 5.15]{Ostack} to the relative case.

\begin{theorem}\label{toricstackth}
Let $P\to Q$ be a homomorphism of fine monoids and let $X$ be a log scheme with a global chart $X\to Z_P$. Then the stacks $\cX'_P[Q]$ and $\cX_P[Q]$ are naturally equivalent.
\end{theorem}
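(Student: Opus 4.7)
The plan is to construct a natural $\u X$-functor $\Phi\colon\cX_P[Q]\to\cX'_P[Q]$ and verify it is an equivalence. The essential inputs are Lemma~\ref{toriclem}, which already describes log-morphisms into $\cX_P[Q]$, and the observation that $X_P[Q]\to\cX_P[Q]$ is a $T_{Q/P}$-torsor, hence an fppf cover carrying the tautological chart $Q\to M_{X_P[Q]}$.

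To define $\Phi$, I start with a scheme morphism $\u S\to\cX_P[Q]$ over $\u X$. Pulling back the canonical log structure on $\cX_P[Q]$ produces a log scheme $S=(\u S,M_S)$ and a strict $X$-log morphism $S\to\cX_P[Q]$; composing with $\cX_P[Q]\to X$ supplies the structural map $f^*M_X\to M_S$. Applying Lemma~\ref{toriclem} to this log morphism extracts a $P$-compatible homomorphism $\gamma\colon Q\to\oM_S$. The fppf-local chart refining $\gamma$ is then obtained by pulling back the tautological chart $Q\to M_{X_P[Q]}$ along any fppf section of $X_P[Q]\to\cX_P[Q]$. Thus $\Phi$ sends $\u S\to\cX_P[Q]$ to the $(X,Q)$-structure $(M_S,f^\flat,\gamma)$.

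For essential surjectivity, given an $(X,Q)$-structure $(M_S,f^\flat,\gamma)$ on $\u S$, the definition provides an fppf cover $\u S'\to\u S$ on which $\gamma$ lifts to an honest chart $Q\to M_{S'}$. Such a chart is exactly the data of a strict $X$-log morphism $S'\to X_P[Q]$, which composes to a scheme morphism $\u S'\to X_P[Q]\to\cX_P[Q]$. Two different lifts of the same $\gamma$ over $\u S'$ differ by a unique section of $T_{Q/P}(\u S')$, and since $\cX_P[Q]=[X_P[Q]/T_{Q/P}]$, the resulting morphisms to $\cX_P[Q]$ are canonically $2$-isomorphic; this canonical isomorphism supplies the descent datum needed to glue back to a morphism $\u S\to\cX_P[Q]$ whose image under $\Phi$ recovers the given triple. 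For full faithfulness, a $2$-isomorphism between two scheme morphisms $\u S\to\cX_P[Q]$ amounts to a $T_{Q/P}$-equivariant isomorphism between the two pulled-back torsors over $X_P[Q]$; compatibility with the common $\gamma$ and the common log structure $M_S$ pins this isomorphism down uniquely.

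I expect the main obstacle to be the descent step in essential surjectivity: one must verify that the $T_{Q/P}$-ambiguity in lifting $\gamma$ to a genuine chart matches precisely the ambiguity absorbed by passing from $X_P[Q]$ to its stack quotient, so that the cocycle condition for the locally defined morphisms $\u S'\to\cX_P[Q]$ is satisfied automatically. Once this is established, all other verifications reduce to Lemma~\ref{toriclem} and the torsor property of $X_P[Q]\to\cX_P[Q]$.
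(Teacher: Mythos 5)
Your proposal is correct and follows essentially the same route as the paper: one direction is obtained by descending the tautological $(X,Q)$-structure from the atlas $X_P[Q]$ to the quotient stack (your $\Phi$, the paper's $\psi$), and the other by choosing a flat-local lift of $\gamma$ to a chart and observing that two such lifts differ by a homomorphism $Q\to\cO_S^\times$, i.e.\ a section of $T_{Q/P}$, so the induced maps to $\cX_P[Q]$ agree (your essential-surjectivity step, the paper's $\phi$). The paper likewise leaves the final compatibility checks to the reader, so your level of detail matches its proof.
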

\begin{proof}
Set $\cX=\cX_P[Q]$ and $\cX'=\cX'_P[Q]$. Given an object $S\to X$, $Q\to\oM_S$ of $\cX'$, let us construct a canonical morphism $S\to\cX$. By flat descent, one can work flat-locally on $S$, hence we can assume that there exists a chart $S\to X_P[Q]$. Two charts $a\:Q\to M_S$ and $b\:Q\to M_S$ differ by a homomorphism $a-b\:Q\to\cO^\times_S$, hence they induce the same morphism $S\to\cX_P[Q]$. This construction is functorial in $M_S$, so we obtain a functor $\phi\:\cX'\to\cX$.

Conversely, there is an obvious $(X,Q)$-structure on $X_0=X_P[Q]$ and two its pullbacks to $X_1=X_0\times_\cX X_0$ coincide. (It is easy to see that the induced homomorphisms $Q\to M_{X_1}$ differ by a homomorphism $Q\to\cO^\times_{X_1}$, hence their sharpenings coincide.) Therefore, this $(X,Q)$-structure descends to $\cX$, and for any morphism of schemes $\uS\to\cX$, the pulled back log structure acquires a canonical $(X,Q)$-structure pulled back from that on $\cX$. Clearly, this construction is also functorial and we obtain a functor $\psi\:\cX\to\cX'$.

Checking that $\phi$ and $\psi$ are essentially inverse is straightforward, and we skip the details.
\end{proof}

\begin{corollary}\label{reltoriccor}
Let $f\:\uS\to\cX_P[Q]$ be a morphism of $X$-schemes, and $M_S$ the induced log structure on $\uS$ with an $(X,Q)$-structure. Then $\Aut(f)$ is naturally isomorphic to the group of $M_X$-automorphisms of $M_S$ that induce identity on $\oM_S$.
\end{corollary}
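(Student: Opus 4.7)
The plan is to read off the corollary directly from Theorem~\ref{toricstackth}. Under the equivalence $\cX_P[Q]\simeq\cX'_P[Q]$, the morphism $f$ corresponds to the log structure $M_S$ on $\uS$ together with its $(X,Q)$-structure, namely the map $f^*M_X\to M_S$ and the homomorphism $\gamma\colon Q\to\oM_S$. By the definition of the fibered category $\cX'_P[Q]$, an automorphism of $f$ is then an automorphism $\sigma$ of $M_S$ as a log structure on $\uS$ (in particular, acting as the identity on $\cO_S^*\subset M_S$) which is compatible with both pieces of datum. Compatibility with the first piece says exactly that $\sigma$ is an $M_X$-automorphism of $M_S$, and compatibility with the second says that the induced sharpening $\overline\sigma$ on $\oM_S$ fixes every element of the image of $\gamma$.

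It therefore remains to show that, for an $M_X$-automorphism $\sigma$, fixing the image of $\gamma$ is equivalent to having $\overline\sigma=\mathrm{id}$ on all of $\oM_S$. One direction is trivial. For the other, I would argue flat-locally on $\uS$: by the very definition of an $(X,Q)$-structure, after a flat cover $\gamma$ lifts to an actual chart $Q\to M_S$, and together with $f^*M_X\to M_S$ this exhibits $M_S$ as the log structure associated to the amalgamated prelog structure $f^*M_X\oplus_P Q$. In particular $\oM_S$ is then generated as a sheaf of monoids by the images of $f^*\oM_X$ and of $\gamma$; on the former $\overline\sigma$ is the identity by $M_X$-linearity, and on the latter by hypothesis. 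Hence $\overline\sigma=\mathrm{id}$ on the flat cover, and this equality of sheaf morphisms descends to $\uS$.

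The argument is essentially bookkeeping: the only care required is in tracking what "preserving the $(X,Q)$-structure" means on the level of sharpenings, and in invoking the liftability condition built into the definition of an $(X,Q)$-structure, which is what allows us to describe $\oM_S$ flat-locally by the two generating sets above. There is no substantive obstacle once Theorem~\ref{toricstackth} is in hand.
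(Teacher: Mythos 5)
Your argument is correct and follows essentially the same route as the paper: identify $\Aut(f)$ via Theorem~\ref{toricstackth} and then use the flat-local liftability of $\gamma$ to a chart to force $\overline{\sigma}=\mathrm{id}$. The paper is slightly more economical---since $\gamma$ flat-locally lifts to a chart, $\overline{\alpha}\colon Q\to\oM_S$ is already surjective, so one does not need the image of $f^*\oM_X$ (nor the slightly overstated claim that $M_S$ is the log structure associated to $f^*M_X\oplus_P Q$, an amalgamation which is only guaranteed to commute over $P$ after sharpening).
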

\begin{proof}
By Theorem \ref{toricstackth}, $\Aut(f)$ is the group of $M_X$-automorphisms $\sigma\:M_S\to M_S$ that are compatible with the homomorphism $\overline{\alpha}\:Q\to\oM_S$. Since the latter locally lifts to a chart, $\overline{\alpha}$ is surjective (when viewing $Q$ as a constant sheaf). In particular, compatibility with $\overline{\alpha}$ takes place if and only if $\overline{\sigma}\:\oM_S\to\oM_S$ is the identity.
\end{proof}

\begin{remark}
In fact, $\Hom_X(S,\cX)$ in the category of log stacks is the set of isomorphism classes of objects in the groupoid $\HOM_\uX(\uS,\cX)$. In particular, as in \cite{Ostack}, this could be used to deduce Lemma~\ref{toriclem} from Theorem~\ref{toricstackth}.
\end{remark}

\subsubsection{The case of $\cX_P[Q_0\.Q_n]$}
Now, let us extend the above results to a tuple $Q=(Q_0\.Q_n)$ of $P$-monoids. By an $(X,Q)$ structure on $M_S$ we mean a tuple of $(X,Q_i)$-structures that share the same homomorphism $f^*M_X\to M_S$. By $\cX'_P[Q]$ we denote the fibered category over the category of $\uX$-schemes whose fiber over $\uS$ is the groupoid of log structures on $\uS$ with an $(X,Q)$-structure. As the reader might guess, this generalization was designed to extend Theorem~\ref{toricstackth} as follows:

\begin{theorem}\label{toricstackth1}
Let $P\to Q_i$, $i=0\. n$ be homomorphisms of fine monoids and let $X$ be a log scheme with a global chart $X\to Z_P$. Then the stacks $\cX'_P[Q_0\.Q_n]$ and $\cX_P[Q_0\.Q_n]$ are naturally equivalent.
\end{theorem}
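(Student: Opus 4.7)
The plan is to prove the theorem by induction on $n$, reducing to the $n=0$ case (Theorem~\ref{toricstackth}). The argument rests on establishing matching fiber-product decompositions on the combinatorial and modular sides.

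On the combinatorial side, iterating Lemma~\ref{facelem} gives
$$J(Q_0,\dots,Q_n) \cong J(Q_0, Q_1) \times_{\Spec(Q_1)} \cdots \times_{\Spec(Q_{n-1})} J(Q_{n-1}, Q_n),$$
and since the functor $F \mapsto \cX_P[F]$ respects fiber products (as recorded in \S\ref{reltoricsec}),
$$\cX_P[Q_0,\dots,Q_n] \cong \cX_P[Q_0, Q_1] \times_{\cX_P[Q_1]} \cdots \times_{\cX_P[Q_{n-1}]} \cX_P[Q_{n-1}, Q_n].$$
On the modular side, unwinding the definition shows that an $(X, Q_0,\dots,Q_n)$-structure on $M_S$ is the same datum as a compatible collection of consecutive $(X, Q_i, Q_{i+1})$-structures sharing their common $(X, Q_i)$-parts, yielding the parallel decomposition
$$\cX'_P[Q_0,\dots,Q_n] \cong \cX'_P[Q_0, Q_1] \times_{\cX'_P[Q_1]} \cdots \times_{\cX'_P[Q_{n-1}]} \cX'_P[Q_{n-1}, Q_n].$$
Combined with Theorem~\ref{toricstackth} identifying $\cX_P[Q_i] \simeq \cX'_P[Q_i]$, this reduces the problem to the case $n=1$.

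For $n = 1$, I would mimic the proof of Theorem~\ref{toricstackth} by constructing mutually quasi-inverse functors $\phi\colon \cX'_P[Q_0, Q_1] \to \cX_P[Q_0, Q_1]$ and $\psi$ in the opposite direction. Given an $(X, Q_0, Q_1)$-structure $(M_S, \gamma_0, \gamma_1)$ on $\uS$, work flat-locally on $\uS$ so that both $\gamma_i$ lift to charts $a_i\colon Q_i \to M_S$. At each geometric point $\bar s$, the pullbacks $p_i = a_i^{-1}(M_{S,\bar s} \setminus \cO^\times_{S,\bar s})$ are prime ideals of $Q_i$ whose sharpened localizations $(Q_i[-p_i])^\sharp$ both embed into $\oM_{S,\bar s}$ with the same image, defining a joint face $F$ of $\Spec(Q_0), \Spec(Q_1)$. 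Applying Theorem~\ref{toricstackth} to the sharp monoid cutting out $F$ produces a local morphism from an open neighborhood $U \subset \uS$ of $\bar s$ to $\cX_P[F]$, which composes with the open immersion $\cX_P[F] \hookrightarrow \cX_P[J(Q_0, Q_1)] = \cX_P[Q_0, Q_1]$ arising from $\cX_P[-]$ applied to the colimit presentation of the join fan. These local morphisms glue to a morphism $\uS \to \cX_P[Q_0, Q_1]$, and independence of the chart choices (which differ by the natural action of $T_{Q_0/P} \times T_{Q_1/P}$) makes this well-defined, yielding $\phi$. Conversely, the tautological $(X, Q_0, Q_1)$-structure on $X_P[Q_0, Q_1]$ (pieced together from the natural projections to each $X_P[Q_i]$) is torus-invariant and descends to $\cX_P[Q_0, Q_1]$; pullback along any morphism $\uS \to \cX_P[Q_0, Q_1]$ then defines $\psi$. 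That $\phi$ and $\psi$ are mutually quasi-inverse is a direct verification modeled on the end of Theorem~\ref{toricstackth}.

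The main technical obstacle will be the joint-face factorization step in the $n=1$ case: showing that the pair of local charts $a_i$ genuinely assembles into a morphism landing in the correct open substack $\cX_P[F] \subset \cX_P[Q_0, Q_1]$, rather than merely in the larger product $\cX_P[Q_0] \times_X \cX_P[Q_1]$. This hinges on identifying joint faces combinatorially in terms of the prime ideals $p_i$ and verifying that their sharpened localizations agree as submonoids of $\oM_{S,\bar s}$. All other steps---the fiber-product decompositions on both sides and the torus-descent argument---run in parallel with the $n=0$ case already handled.
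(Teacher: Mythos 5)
Your proposal is correct and its core is the same as the paper's argument: the decisive step in both is that flat-locally the charts $Q_i\to M_S$ identify the sharpened localizations of the $Q_i$ with $\oM_S$, exhibiting $\Spec(\oM_S)$ as a joint face of the $\Spec(Q_i)$ and hence producing the morphism into $\cX_P[J(Q_0\.Q_n)]$. The only difference is that your induction via Lemma~\ref{facelem} is unnecessary scaffolding --- the joint-face construction you carry out for $n=1$ works verbatim for an arbitrary tuple, which is exactly how the paper proceeds (taking $N=\Gamma(\oM_S)$ flat-locally as the common face of all the $\Spec(Q_i)$ at once, which also sidesteps the pointwise gluing you flag as the main technical obstacle).
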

\begin{proof}
Set $Q=(Q_0\.Q_n)$ for brevity. The pullbacks to $\cX_P[Q]$ of the $(X,Q_i)$-structures on $\cX_P[Q_i]$ form a canonical $(X,Q)$-structure on $\cX_P[Q]$, hence we obtain a morphism $\cX_P[Q]\to\cX'_P[Q]$.

Conversely, assume that $f\:S\to X$ is a morphism of log schemes, and $f^*M_X\to M_S$ is upgraded to an $(X,Q)$-structure by homomorphisms $\overline{\alpha_i}\:Q_i\to\oM_S$. We will assign to this datum a canonical morphism $\uS\to\cX_P[Q]$. This can be done flat-locally on $\uS$, so we can assume that $S$ has a global chart $N=\Gamma(\oM_S)\to M_S$ and, in addition, each homomorphism $Q_i\to\oM_S$ lifts to a chart $Q_i\to M_S$. In this case, the induced homomorphisms $Q_i\to N$ are sharpened localizations, hence $\Spec(N)$ is a joint face of $\Spec(Q_i)$, $0\le i\le n$, and we obtain a morphism $h_{\overline{\alpha}}\:S\to X_P[N]\into X_P[Q]$. This provides a functor $\cX'_P[Q]\to\cX_P[Q]$.

Again, the check that the two functors are essentially inverse is straightforward. For example, it is clear from the construction that $h_{\overline{\alpha}}$ induces the $(X,Q)$-structure corresponding to $Q_i\to N\to M_S\to\oM_S$, and hence $\cX'_P[Q]\to\cX_P[Q]\to\cX'_P[Q]$ is equivalent to the identity.
\end{proof}

\subsection{The presentation of $\L X$}

\subsubsection{The local case}
We start with the case when $X$ possesses a global chart, and let the toric $X$-stack $\cL_X=\cX_P[\cL_P]$ and its groupoid presentation $\cX_\bullet$ be as in Example~\ref{groupoidex}(ii). Since $\cX_n$ is a groupoid of $X$-log schemes, we obtain compatible morphisms $\cX_n\to\L X$ and hence also $\cL_X\to\L X$.

\begin{theorem}\label{logXth}
If $X$ is a log scheme $X$ possessing a global chart $X\to Z_P$, then the morphism $\cL_X\to\L X$ is an equivalence. In particular, $\cL_X$ depends only on $X$, but not on the choice of the chart.
\end{theorem}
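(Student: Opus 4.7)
The plan is to exhibit $\cX_\bullet$ as a groupoid presentation of $\L X$, so that the induced map on quotients $\cL_X=[\cX_\bullet]\to\L X$ is automatically an equivalence. The morphism $\cL_X\to\L X$ itself requires no extra work: each $\cX_n$ carries a tautological fine $X$-log structure (inherited from $X_P[Q_0\.Q_n]$), and the face and degeneracy maps of $\cX_\bullet$ are strict, so they provide compatible morphisms $\cX_n\to\L X$ that descend to the quotient. The content is then (i) to show that $\cX_0\to\L X$ is representable, \'etale and surjective, and (ii) to identify $\cX_0\times_{\L X}\cX_0$ with $\cX_1$; the higher levels will follow formally from the identities $\cX_n\simeq\cX_1\times_{\cX_0}\dots\times_{\cX_0}\cX_1$ inherited from Lemma~\ref{facelem} via the product-preserving functor $\cX_P[-]$.

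The key step is (ii), which is where I would use the modular characterization of Theorem~\ref{toricstackth1}. An $\uS$-valued point of $\cX_P[Q_0]\times_{\L X}\cX_P[Q_1]$ is a fine $X$-log scheme $S$ endowed with two $(X,Q_i)$-structures on $M_S$; since both live over $X$, the two underlying homomorphisms $f^*M_X\to M_S$ automatically coincide, so such a datum is exactly an $(X,(Q_0,Q_1))$-structure in the sense of the paragraph preceding Theorem~\ref{toricstackth1}. That theorem then delivers the equivalence $\cX_P[Q_0]\times_{\L X}\cX_P[Q_1]\simeq\cX_P[Q_0,Q_1]$, and taking disjoint unions over all pairs $(Q_0,Q_1)$ yields $\cX_0\times_{\L X}\cX_0\simeq\cX_1$.

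For (i), representability and \'etaleness of each summand $\cX_P[Q]\to\L X$ can be imported from Olsson's Theorem~\ref{coverth} applied with $U=X$, or verified directly: a fiber over $T\to\L X$ classifies the chart-data $\gamma\:Q\to\oM_T$ compatible with $P$, which is a constructible locally constant subsheaf of $\Hom(Q,\oM_T)$ and hence \'etale over $T$, while the remaining ambiguity between lifts $Q\to M_T$ of a given $\gamma$ is a $T_{Q/P}$-torsor and is precisely what the stack quotient $[X_P[Q]/T_{Q/P}]$ absorbs. Surjectivity reduces to the standard local existence of charts: given $S\to X$, \'etale locally on $\uS$ there is a chart $N\to M_S$, and replacing $N$ by $Q:=N\oplus P$ with its obvious structure maps produces an $(X,Q)$-structure and hence a factorization $S\to\cX_P[Q]\to\L X$. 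The most delicate bookkeeping I anticipate is making sure the simplicial compatibilities at levels $n\ge 2$ are automatic, but this reduces formally to the $n=1$ case via Lemma~\ref{facelem} and the fact that $\cX_P[-]$ preserves fiber products. The last sentence of the theorem is then a free bonus, since $\L X$ itself is defined intrinsically from $X$.
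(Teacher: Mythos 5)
Your proposal is correct and follows essentially the same route as the paper: essential surjectivity of $\cX_0\to\L X$ via local existence of charts compatible with $P$, and identification of the fiber-product groupoid with $\cX_\bullet$ by recognizing points of $\cX_P[Q_0]\times_{\L X}\cX_P[Q_1]$ as $(X,(Q_0,Q_1))$-structures and invoking Theorem~\ref{toricstackth1} (the paper does all simplicial levels at once rather than bootstrapping from $n=1$, but this is the same argument). One caveat: you should rely on your direct verification rather than ``importing'' Theorem~\ref{coverth}, since the paper deliberately rederives that statement as a corollary of this theorem; in any case representability and \'etaleness of $\cX_0\to\L X$ are not needed to conclude that the quotient of the induced groupoid is $\L X$.
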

\begin{proof}
If $\uS$ is an $X$-scheme with an $X$-morphism $\uS\to\L X$, then there exists a flat covering $\coprod_i\uS_i\to\uS$ such that the induced log structure on each $\uS_i$ possesses a global chart $\cX_P[Q_i]$. Then $\uS_i\to\L X$ factors through $\cX_P[Q_i]$, and hence $\coprod_i\uS_i\to\L X$ factors through $\cX_0=\coprod_Q\cX_P[Q]$. This implies that the morphism $\cX_0\to\L X$ is essentially surjective, and it remains to show that the induced groupoid $\cX'_\bullet$ given by $\cX'_0=\cX_0$ and $\cX'_{n+1}=\cX'_n\times_{\L X}\cX'_0$ is equivalent to $\cX_\bullet$.

Clearly, $\cX'_n$ is the disjoint union of fiber products over $\L X$ of tuples $\cX'_P[Q_0]\.\cX'_P[Q_n]$. By definition, such a product represents the functor associating to $\uS$ the following dataum: log structures $M_i$ on $\uS$ with $(X,Q_i)$-structures for $0\le i\le n$ and isomorphisms $M_i\toisom M_{i+1}$ for $0\le i <n$. Clearly, this is nothing else but an $(X,Q_0\.Q_n)$-structure, so such a fiber product is tautologically equivalent to $\cX'_P[Q_0\.Q_n]$. By Theorem~\ref{toricstackth1} it is equivalent to $\cX_P[Q_0\.Q_n]$ and taking disjoint union over all tuples of $P$-monoids we obtain that $\cX'_n\toisom\cX_n$. Naturally, this is compatible with the maps in $\cX'_\bullet$ and $\cX_\bullet$, and we skip this check.
\end{proof}

\subsubsection{The general case}
Theorem \ref{logXth} generalizes straightforwardly to the case when $X$ possesses a global fan, which does not have to be affine. However, not every log scheme possesses a global fan, so we prefer to globalize this theorem by a more ad hoc tool.

\begin{lemma}\label{descentlem}
Assume that $X_0\to X$ is a strict flat morphism of log schemes and consider the associated groupoid $X_{n+1}=X_n\times_XX_0$. Then $\L {X_\bullet}$ is an inert groupoid in stacks and its quotient is equivalent to $\L X$.
\end{lemma}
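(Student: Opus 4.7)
The plan is to identify the simplicial stack $\L{X_\bullet}$ with the base change of the underlying simplicial scheme $\uX_\bullet$ along $\L X\to\uX$; both assertions will then follow, respectively, from the scheme-theoretic origin of the face maps and from fppf descent for algebraic stacks. The hypothesis that $g\:X_0\to X$ is strict is what enables this reduction to underlying schemes.

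The key and only non-formal step is to establish an equivalence $\L{X_0}\toisom\uX_0\times_\uX\L X$ as fibered categories over $\uX$. An object of $\L{X_0}$ over an $\uX_0$-scheme $g'\:\uS\to\uX_0$ is a fine log structure $M_S$ together with a homomorphism $g'^*M_{X_0}\to M_S$. Strictness of $g$ gives $M_{X_0}=g^*M_X$, so this datum is equivalent to a fine log structure on $\uS$ equipped with $(g\circ g')^*M_X\to M_S$ (that is, an object of $\L X$ over the $\uX$-scheme $\uS$) together with a factorization of $\uS\to\uX$ through $\uX_0$; this is precisely an $\uS$-point of $\uX_0\times_\uX\L X$.

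Combining this identification with the fiber-product compatibility of $\L{}$ from Theorem~\ref{basicth}(ii) and iterating, one obtains
\[
\L{X_n}\toisom\L{X_0}\times_{\L X}\cdots\times_{\L X}\L{X_0}\toisom\uX_n\times_\uX\L X,
\]
and all face and degeneracy maps of $\L{X_\bullet}$ are pulled back from those of the simplicial scheme $\uX_\bullet$. In other words, $\L{X_\bullet}$ is the \v{C}ech nerve of the representable morphism $\L{X_0}\to\L X$ obtained by base changing $\uX_0\to\uX$ along $\L X\to\uX$. Since $\uX_\bullet$ is a strictly $1$-categorical groupoid (its structure comes from a morphism of schemes, which carries no non-trivial automorphisms), so is its base change $\L{X_\bullet}$, which is precisely the inertness assertion.

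For the quotient, we observe that $\L{X_0}\to\L X$ is the base change of the fppf morphism of schemes $\uX_0\to\uX$ along $\L X\to\uX$, and is therefore itself a representable fppf covering. By fppf descent for algebraic stacks, the quotient stack associated to the \v{C}ech nerve of such a covering is canonically equivalent to its target, yielding $[\L{X_\bullet}]\toisom\L X$. The only genuinely subtle point in the argument is the initial identification $\L{X_0}\toisom\uX_0\times_\uX\L X$, which hinges essentially on strictness of $g$; the rest of the proof is a direct application of Theorem~\ref{basicth}(ii) and descent.
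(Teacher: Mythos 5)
Your proof is correct and follows essentially the same route as the paper's: both arguments hinge on strictness to identify log-structure data over $X_0$ with data over $X$, and then invoke the compatibility of $\L{}$ with fiber products (Theorem~\ref{basicth}(ii)) to handle the higher terms of the groupoid and the descent to the quotient. Your explicit identification $\L{X_0}\simeq\uX_0\times_\uX\L X$ is just a more structural packaging of the paper's direct computation $\Aut(f)=\Aut(f_0)$, and like the paper you implicitly read ``flat'' as a faithfully flat (fppf) covering, which is needed for the quotient to be all of $\L X$.
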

\begin{proof}
For any morphism of log schemes $S\to X_0$, an $M_X$-automorphism of $M_S$ is the same as an $M_{X_0}$-automorphism. So, if $f_0\:\uS\to\L{X_0}$ and $f\:\uS\to\L X$ are the corresponding morphisms, then $\Aut(f)=\Aut(f_0)$. This proves that the map $\L{X_0}\to\L X$ is inert. Furthermore, the maps in $X_\bullet$ are strict, hence the maps in $\L{X_\bullet}$ are inert. Since the 2-functor $\L {}$ commutes with fiber products, we obtain all remaining claims.
\end{proof}

\subsection{Applications}
Now, we can apply the theory developed above to provide new proofs of Olsson's theorems.

\subsubsection{Proof of Theorem~\ref{coverth}}
If $X$ possesses a global chart, then this is an obvious corollary of Theorem~\ref{logXth} and it suffices to take $U=X$. In general, we fix a strict \'etale covering $\coprod_iU_i\to X$ such that each $U_i$ possesses a global chart and use that $\coprod\L{U_i}\to\L X$ is a covering, for example, by Lemma~\ref{descentlem}.

\subsubsection{Proof of Theorem~\ref{basicth}(i)}
If $X$ possesses a global chart $Z_P$, then by Theorem~\ref{logXth} $\L X$ is the quotient of the inert groupoid $\cX_\bullet$ in algebraic stacks of locally finite presentation over $X$. By \cite[Lemma~1.4]{ATWdestackification} this quotient is an algebraic stack of locally finite presentation over $X$. Moreover, the maps $\cX_1\rightrightarrows\cX_0$ are \'etale (even disjoint unions of open immersions), hence the stabilizers of $\L X$ are \'etale extensions of the stabilizers of $\cX_0$, and the latter are diagonalizable groups.

If $X$ is arbitrary, then we fix a strict \'etale covering $X_0\to X$ by a disjoint union of log schemes admitting a global chart. Clearly, each $X_n$ in the associated groupoid is also a disjoint union of log schemes admitting a global chart. By Lemma~\ref{descentlem} $\L{X_\bullet}$ is an inert groupoid, and its components are algebraizable of locally finite presentation by the local case. Using \cite[Lemma~1.4]{ATWdestackification} once again, we obtain that $\L X$ is as required.

\subsubsection{Proof of Theorem~\ref{basicth}(iii)}
Using \'etale descent, this claim also easily reduces to the case when $X$ possesses a global chart $X\to Z_P$. Then the morphism $i_X\:X\to\L X$ is induced by the trivial face $\cX_P[P]=X$ of the cover $\coprod_Q\cX_P[Q]\to\L X$, and hence $i_X$ is \'etale by Theorem~\ref{coverth}. Furthermore, $X\times_{\L X}X=\cX_P[P,P]$ by Theorem~\ref{toricstackth1}, and $\cX_P[P,P]=X$ in a tautological way. Thus, $i_X$ is a monomorphism and hence an open immersion.

\begin{section}{Comparison of Log and Weak Log Properties}

\subsection{The definitions}
Let $f: X \rightarrow Y$ be a morphism of log schemes. As before, we denote by $\L f$ the associated morphism $\L X \rightarrow \L Y$; we write $w\L f$ for the composition $\L f \circ i_X: X \rightarrow \L X \rightarrow \L Y$. In \cite[Definition 4.1]{Ostack}, Olsson gives the following definition:

\begin{definition}
Let $\c{P}$ be a property of representable morphisms of algebraic stacks. We say that a morphism $f: X \rightarrow Y$ of fine log schemes has property log $\c{P}$ (resp. weak log $\c{P})$ if $\L f$ (resp. $w \L f$) has property $\c{P}$.
\end{definition}

Properties log $\c{P}$ and weak log $\c{P}$ are not equivalent in general. It is however convenient to know the equivalence of log $\c{P}$ and weak log $\c{P}$ when possible, as the former has better functorial properties while the latter is simpler to work with. Our next goal is to give a simple criterion which ensures equivalence of log $\c{P}$ and weak log $\c{P}$. \\

\subsection{The comparison theorem}
The key result that allows us to compare properties of $\L f$ and $w \L f$ is the following proposition, which shows that \'etale charts of $\L f$ are obtained by pulling back \'etale charts of $w\L f$.

\begin{proposition}
\label{prop:Cartesian}
Let $f: X \rightarrow Y$ be a morphism of log schemes, and assume that $f$ admits a global injective chart $P \rightarrow Q$. Let $Q \rightarrow R$ be an injective homomorphism of monoids. Then there is a 2-cartesian diagram
\begin{align*}
\xymatrix{\c{X}_Q[R] \ar[d] \ar[r] & X\ar[d]\\
\c{Y}_P[R]\ar[r]& \c{Y}_P[Q].}
\end{align*}
\end{proposition}
\begin{proof}
In fact, the claim is that
$$X\times_{\cZ_Q}\cZ_R\toisom X\times_{Y\times_{\cZ_P}\cZ_Q}Y\times_{\cZ_P}\cZ_R.$$
This is a general property of 2-fiber products which is straightforward to check.
\end{proof}

\begin{theorem}
\label{theorem: logPiffwlogP}
Let $\c{P}$ be a property of representable morphisms of algebraic stacks which is \'etale local on the source and target, is stable under composing with \'etale maps, and is additionally stable under pullbacks with respect to morphisms locally of finite type.\footnote{Properties of morphisms of schemes which are \'etale local on the source, target, and stable under postcomposing with \'etale maps are called \emph{\'etale local on the source-and-target} in \cite[\href{https://stacks.math.columbia.edu/tag/04QW}{Tag 04QW}]{stacks-project}. It is shown in loc. cit. that their stability under precomposing with \'etale maps and with respect to \emph{\'etale} pullbacks is in fact automatic.} Then a morphism $f:X \rightarrow Y$ of log schemes has property log $\c{P}$ if and only if it has property weak log $\c{P}$.
\end{theorem}
\begin{proof}
As $\c{P}$ is stable under composing with \'etale maps, and $X \rightarrow \L X$ is an open immersion, property $\c{P}$ for $\L f$ implies property $\c{P}$ for $w \L f$. So log $\c{P}$ implies weak log $\c{P}$. Assume conversely that $w \L f: X \rightarrow \L Y$ has property $\c{P}$. As $\c{P}$ is is \'etale local on $X$, we can assume that $f$ admits a global chart $P \rightarrow Q$. Consider the following commutative diagram, where $Q \rightarrow R$ is an arbitrary injective map of monoids:

\[
\begin{tikzcd}
\L X \ar[d, "\L f"] & \c{X}_Q[R] \ar[l, "a"] \ar[r] \ar[d] & X \ar[r,"\textup{id}"] \ar[d,"g"] & X \ar[d, "w \L f"]\\ \L Y & \c{Y}_P[R] \ar[l,"b"] \ar[r]  & \c{Y}_P[Q] \ar[r,"c"] & \L Y
\end{tikzcd}
\]

\noindent Here the maps $a,b,c$ are the maps of the \'etale representable cover of Theorem \ref{coverth}. As $w \L f = c \circ g$ has $\c{P}$ and $c:\c{Y}_P[Q] \rightarrow \L Y$ is \'etale, the map $g:X \rightarrow \c{Y}_P[Q]$ must also have property $\c{P}$: pulling back $c \circ g: X \to \L Y$ along $c$, we see that the projection map $p_2:X \times_{\L Y} \c{Y}_P[Q] \rightarrow \c{Y}_P[Q]$ has $\c{P}$, and the projection $p_1:X \times_{\L Y} \c{Y}_P[Q] \rightarrow X$ is \'etale; the section $(\textup{id},g): X \rightarrow X \times_{\L Y} \c{Y}_P[Q]$ is thus \'etale, and $g = p_2 \circ (\textup{id},g)$ has $\c{P}$, as $\c{P}$ is stable under composing with \'etale maps. Next, since the morphism $\c{Y}_P[R] \rightarrow \c{Y}_P[Q]$ is locally of finite type, Proposition \ref{prop:Cartesian} implies that $\c{X}_Q[R] \rightarrow \c{Y}_P[R]$ has property $\mathcal{P}$. Composing with the \'etale map $b$, we see that $\c{X}_Q[R] \rightarrow \L Y$ has $\c{P}$. But the stacks $\c{X}_Q[R]$ cover $\L X$ as $Q \rightarrow R$ ranges through all injective maps, and $\c{P}$ is \'etale local on the source, so $\L f$ has property $\mathcal{P}$. Thus weak log $\c{P}$ implies log $\c{P}$.
\end{proof}

\end{section}

\begin{section}{Log Regularity}
\label{section: logreg}

\subsection{The definition}
The main new application of Theorem \ref{theorem: logPiffwlogP} concerns logarithmic regularity. Recall that classically a morphism of schemes $\u{f}:\u{X} \rightarrow \u{Y}$ is called regular if it is flat and its fibers are locally Noetherian and geometrically regular. As regularity is local for the smooth topology on both source and target, the definition extends to morphisms of algebraic stacks in a straightforward manner.

\begin{definition}
\label{defn: log-reg}
A morphism $f: X \rightarrow Y$ is log regular if $\L f: \L X \rightarrow \L Y$ is regular. It is weakly log regular if $w \L f: \u X \rightarrow \L Y$ is regular.
\end{definition}


\begin{lemma}
\label{cor: properties}
Let $f\:X\to Y$, $g\:Y\to Z$ and $\phi\:Y'\to Y$ be morphisms of log schemes with the base change $f'\:X'=X\times_YY'\to Y'$.
\begin{enumerate}
\item Stability under compositions: if $f$ and $g$ are log regular, then $g\circ f$ is log regular.
\item Stability under base changes: if $f$ is log regular and $f'$ has locally Noetherian fibers (the latter is automatic if $X'$ is locally Noetherian), then $f'$ is log regular.
\item Log flat descent: if $f'$ is log regular and $\phi$ is log faithfully flat, then $f$ is log regular. In particular, log regularity is log fppf local on the base.
\item If $g \circ f$ is log regular and $f$ is a log fpqc cover, then $g$ is log regular. In particular, log regularity is log smooth local on the source.
\item If $g \circ f$ is log regular and $g$ is log \'etale, then $f$ is log regular.
\item If $f$ is strict, it is log regular if and only if it is regular.
\end{enumerate}
\end{lemma}
\begin{proof}
We claim that if the morphism $f'$ has locally Noetherian fibers, then the fibers of $\L{f'}\:\L{X'}\to\L{Y'}$ are also locally Noetherian. Since $\L{}$ is compatible with base changes, it suffices to prove this when $Y'$ is a point. Then $X'$ is locally Noetherian, and since $\L{X'}$ is locally of finite presentation over $X'$, $\L{X'}$ and the fibers of $\L{f'}$ are locally Noetherian. Now, claims (1)--(5) follow immediately by replacing $X \rightarrow Y$ with $\L X \rightarrow \L Y$ and invoking the well known classical analogues. The assumption on $f'$ in (2) is needed because in general local Noetherianity can be lost under base field extensions, and descent of local Noetherianity requires the quasi-compactness assumption in (4).

Finally, if $f$ is strict, then the map $X \rightarrow \L Y$ factors through $Y$, and since the map $Y \rightarrow \L Y$ is \'etale and $X \rightarrow \L Y$ is regular, we can conclude.
\end{proof}

As regularity is stable under pullbacks and can be checked \'etale locally on the source and flat locally on the base, Theorem \ref{theorem: logPiffwlogP} yields

\begin{theorem}
\label{theorem: equivalence}
A map $f:X \rightarrow Y$ is log regular if and only if it is weakly log regular.
\end{theorem}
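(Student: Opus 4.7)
The plan is to obtain Theorem~\ref{theorem: equivalence} as an immediate corollary of the comparison Theorem~\ref{theorem: logPiffwlogP}, by taking $\cP$ to be the property ``regular'' (in the sense of EGA: flat with geometrically regular fibers). All of the real work has been done in proving Theorem~\ref{theorem: logPiffwlogP}; what remains is to verify that regularity satisfies the three hypotheses needed to invoke it.

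First I would check stability under base change: if $g\:\cY'\to\cY$ is any morphism of algebraic stacks and $f\:\cX\to\cY$ is regular, then the pullback $f'\:\cX\times_\cY\cY'\to\cY'$ is again flat (flatness is preserved by base change) and its geometric fibers are identified with geometric fibers of $f$ (geometric regularity of a $k$-scheme is preserved under any extension of the base field). Next, \'etale-locality on the source: since \'etale morphisms are themselves regular and regularity composes and satisfies the standard cancellation property with \'etale morphisms, a morphism is regular if and only if it becomes regular after an \'etale surjective base change of the source. Finally, flat-locality on the target is faithfully flat descent of regularity: flatness descends along faithfully flat morphisms, and geometric regularity of fibers can be checked after a faithfully flat base change of the target since the geometric fibers of the base change surject onto those of the original map. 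With these three inputs in hand, Theorem~\ref{theorem: logPiffwlogP} gives the equivalence.

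The one mildly subtle point will be that Theorem~\ref{theorem: logPiffwlogP} is phrased for properties of representable morphisms of stacks, whereas $\L f$ is typically not representable. However, regularity extends unambiguously to non-representable morphisms of algebraic stacks via smooth presentations, and the proof of Theorem~\ref{theorem: logPiffwlogP} only uses that $\cP$ can be verified along the explicit \'etale covers $\cX_Q[R]\to\L X$ and $\cY_P[R]\to\L Y$ by toric stacks, together with the cartesian squares produced there. Since each of the morphisms appearing in those squares is representable, the verification reduces to the representable case and no additional input is required. I do not anticipate any substantial obstacle beyond this bookkeeping.
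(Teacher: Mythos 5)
Your proposal is correct and is exactly the paper's argument: the theorem is deduced in one line by applying Theorem~\ref{theorem: logPiffwlogP} to the property ``regular,'' using that regularity is stable under pullback, \'etale-local on the source, and flat-local on the base. Your additional verifications of these three standard facts (and the remark on reducing to the representable case along the toric covers) are sound but are simply taken as known in the paper.
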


Finally, since the canonical maps between $X$ and its integralization and its saturation are log \'etale, we obtain

\begin{corollary}
\label{cor: saturation}
Let $?$ and $*$ denote the identity, integralization, or saturation functor, and assume that $f\:X \rightarrow Y$ is a morphism of log schemes such that the morphism $f'\:X^{?} \rightarrow Y^{*}$ is defined. Then $f$ is log regular if and only if $f'$ is log regular.
\end{corollary}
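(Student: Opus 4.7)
The plan is to reduce the statement to the structural properties of log regular morphisms already established in Corollary~\ref{cor: properties}. The essential input, highlighted in the statement preceding the corollary, is that for any fine log scheme the canonical morphism from its integralization and from its saturation is log \'etale; this should be invoked as a standard fact from the foundational literature. Whenever $f'\colon X^{?}\to Y^{*}$ is defined, the defining compatibility with $f$ fits into a commutative square
\begin{equation*}
\xymatrix{X^{?} \ar[r]^{f'} \ar[d] & Y^{*} \ar[d] \\ X \ar[r]^{f} & Y}
\end{equation*}
in which both vertical morphisms are log \'etale.

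Suppose first that $f$ is log regular. Then by part~(1) of Corollary~\ref{cor: properties}, the composition $X^{?}\to X\xrightarrow{f}Y$ is log regular, the first factor being log \'etale and hence log regular. Commutativity of the square lets us rewrite this composition as $X^{?}\xrightarrow{f'}Y^{*}\to Y$. Part~(5) of the corollary now applies with $Y^{*}\to Y$ in the role of the log \'etale morphism, yielding that $f'$ is log regular.

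Conversely, suppose that $f'$ is log regular. Composing with the log \'etale morphism $Y^{*}\to Y$ and using part~(1), the morphism $X^{?}\xrightarrow{f'}Y^{*}\to Y$ is log regular. By commutativity of the square this equals $X^{?}\to X\xrightarrow{f}Y$, and part~(4) of the corollary applies with $X^{?}\to X$ in the role of the log fppf (in fact log \'etale) morphism. The conclusion is that $f$ is log regular.

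No genuine obstacle is expected beyond citing the log \'etaleness of the integralization and saturation morphisms; everything else is a formal two-out-of-three style argument using the already established permanence properties of log regularity.
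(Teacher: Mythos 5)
Your proposal is correct and matches the paper's intended argument: the paper offers no separate proof, deriving the corollary in one line from the log \'etaleness of the integralization and saturation maps together with the permanence properties of Corollary~\ref{cor: properties}, which is exactly the two-out-of-three argument you spell out. The only point worth making explicit is that invoking part~(4) requires $X^{?}\to X$ to be log fppf, which holds because the saturation (and integralization) morphisms are not just log \'etale but also surjective.
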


\subsection{Chart Criterion}
\label{section: Chart Criterion} The exposition in this section follows Ogus' treatment of logarithmic smoothness in \cite{oguslog}. To extend the arguments to the log regular case, we need to establish a few results about regular morphisms of stacks. There are two possible approaches to this. One is to use Olsson's theory of the log cotangent complex \cite{Ocot}. The other, which we take, is elementary -- the only technical tool required is smooth descent -- but it is longer as we need to develop some results about differentials of stacks, which are probably well known. Both approaches rely on the next lemma and the following corollary \footnote{This is the reason we need to impose some Noetherian hypotheses.}:

\begin{lemma}
\label{injectiveimpliesreg}
Let
\begin{align*}
\xymatrix{ X \ar[r]^f & Y \ar[r]^g & Z}
\end{align*}
be morphisms of locally Noetherian schemes, and suppose that $g$ is smooth and $g \circ f$ is regular at $x \in X$. Then $f$ is regular at $x$ if and only if the map $f^*\Omega_{Y/Z} \otimes k(x) \rightarrow \Omega_{X/Z} \otimes k(x)$ is injective.
\end{lemma}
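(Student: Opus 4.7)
The plan is to reduce, via étale-locality on $Y$ and a fiberwise analysis over $z := g(f(x))$, to the following statement: for a morphism $h: X' \to \mathbb{A}^n_k$ of locally Noetherian $k$-schemes with $X'$ geometrically regular at $x$, $h$ is regular at $x$ if and only if $ds_1, \ldots, ds_n$ are $k(x)$-linearly independent in $\Omega_{X'/k}\otimes k(x)$, where $s_i = h^\sharp(t_i)$. This reduced statement is then settled by a direct computation using the conormal sequence together with miracle flatness.

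For the first reduction, both the conclusion and the differential condition are local at $x$ and unchanged by étale base change on $Y$: regularity of $f$ is reflected along étale maps (the classical analogue of Corollary~\ref{cor: properties}(5), which holds because a section of an étale morphism is an open immersion, so $B \to C$ factors as $B \to C\otimes_A B \to C$, flat composed with an open immersion, whenever $A \to B$ is étale-local and $A \to C$ is flat), and $\Omega_{Y/Z}$ pulls back along étale morphisms of $Y$. Since $g$ is smooth, it factors étale-locally as $Y \to \mathbb{A}^n_Z \to Z$ with the first arrow étale, so we may assume $Y = \mathbb{A}^n_Z$; then $f$ corresponds to sections $s_i := f^\sharp(t_i) \in \mathcal{O}_{X,x}$, and the map in question sends $dt_i$ to $ds_i$. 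For the second reduction, formation of $\Omega_{(-)/Z}$ commutes with base change, so the differential condition for $f$ at $x$ coincides with that for the fiber morphism $f_z: X_z \to Y_z = \mathbb{A}^n_{k(z)}$. Since $g$ and $g \circ f$ are flat, the fiberwise criterion of flatness (EGA IV 11.3.11) gives that $f$ is flat at $x$ if and only if $f_z$ is, while the fibers $X_y$ and $(f_z)^{-1}(y)$ coincide. Hence $f$ is regular at $x$ iff $f_z$ is, putting us in the reduced situation with $k = k(z)$.

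For the reduced statement, let $A = \mathcal{O}_{X',x}$ and $B = \mathcal{O}_{\mathbb{A}^n_k, y}$ with $y = h(x)$, both regular local, and $K = k(x)$. Geometric regularity of $A$ over $k$ is equivalent to formal smoothness, so the conormal sequence
$$0 \to \mathfrak{m}_A/\mathfrak{m}_A^2 \to \Omega_{A/k}\otimes_A K \to \Omega_{K/k} \to 0$$
is split exact; in particular $K/k$, and hence the intermediate extension $k(y)/k$, are separable. Choose polynomials $P_1, \ldots, P_r \in k[t_1,\ldots,t_n]$, with $r = \dim B = n - \operatorname{trdeg}(k(y)/k)$, whose images in $B$ form a regular system of parameters. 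A chain-rule calculation, combined with the Jacobian description of the kernel of $(a_i) \mapsto \sum a_i d\bar s_i$ in $\Omega_{K/k}$, shows that the classes of $P_1(s), \ldots, P_r(s)$ in $\mathfrak{m}_A/\mathfrak{m}_A^2$ are $K$-linearly independent if and only if $ds_1, \ldots, ds_n$ are $K$-linearly independent in $\Omega_{A/k}\otimes K$. By miracle flatness ($B$ regular of dimension $r$, $A$ Cohen–Macaulay), the former condition is equivalent to flatness of $B \to A$; then the fiber $A/\mathfrak{m}_y A$ is the quotient of $A$ by a partial regular sequence, hence regular, and geometrically regular over $k(y)$ because $K/k(y)$ is separable.

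The main obstacle is the chain-rule step in the final paragraph: one must decompose $ds_i = d\bar s_i + d(s_i - \widetilde{\bar s}_i)$ using the splitting of the conormal sequence, where $\widetilde{\bar s}_i$ is a lift of $\bar s_i \in K$ to $A$, available at least after completion via Cohen structure thanks to separability of $K/k$. Once this decomposition is in place, the identification of the ``relations'' among the $ds_i$ in $\Omega_{K/k}$ with the Jacobian rows $((\partial_i P_j)(\bar s))_i$, and of their image in $\mathfrak{m}_A/\mathfrak{m}_A^2$ with the classes of $P_j(s)$, is a routine polynomial calculation.
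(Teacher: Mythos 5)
Your two reductions (\'etale-localizing $g$ so that $Y=\mathbb{A}^n_Z$, then passing to the fiber over $z$ via the local fiberwise flatness criterion) are sound and correctly isolate the essential case. The gap is in the final step, and it is not the ``routine polynomial calculation'' you defer: the asserted equivalence between ``$P_1(s),\dots,P_r(s)$ are $K$-linearly independent in $\mathfrak{m}_A/\mathfrak{m}_A^2$'' and ``$ds_1,\dots,ds_n$ are $K$-linearly independent in $\Omega_{A/k}\otimes K$'' is false. Take $k=\mathbb{F}_p$, $X'=\Spec k[u]$, $x$ the generic point, and $h\colon X'\to\mathbb{A}^1_k$ given by $t\mapsto u^p$. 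Then $A=K=k(u)$, $\mathfrak{m}_A=0$, $k(y)=k(u^p)$ and $r=\dim B=0$, so your left-hand condition is vacuously true; but $ds_1=d(u^p)=0$, so the right-hand condition fails --- as it must, since $h$ is flat at $x$ with regular closed fiber $\Spec K$ and yet is not regular there, because $K\otimes_{k(y)}K\cong K[v]/((v-u)^p)$ is non-reduced. Two of your steps break on this example: the ``Jacobian description'' of $\ker\bigl(K^n\to\Omega_{K/k}\bigr)$ as the span of the rows $\bigl((\partial_iP_j)(\bar s)\bigr)_i$ only accounts for the kernel of $K^n\to\Omega_{k(y)/k}\otimes_{k(y)}K$, and misses the kernel of $\Omega_{k(y)/k}\otimes_{k(y)}K\to\Omega_{K/k}$, which is nonzero exactly when $K/k(y)$ is inseparable; and ``geometrically regular over $k(y)$ because $K/k(y)$ is separable'' is unjustified, since separability of $K/k$ (here $k$ is even perfect) does not pass to the upper half of the tower $k\subset k(y)\subset K$. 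This is not peripheral: distinguishing geometric from plain regularity of the fiber is precisely the information that injectivity of $f^*\Omega_{Y/Z}\otimes k(x)\to\Omega_{X/Z}\otimes k(x)$ must carry, and your argument only ever tests flatness and plain regularity of the closed fiber.

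Two smaller points. First, ``the former condition is equivalent to flatness of $B\to A$'' overstates miracle flatness: linear independence of the $P_j(s)$ in $\mathfrak{m}_A/\mathfrak{m}_A^2$ is strictly stronger than flatness (for $k[[u^2]]\subset k[[u]]$ the map is flat but $u^2\equiv 0$ in $\mathfrak{m}/\mathfrak{m}^2$), and likewise ``quotient by a partial regular sequence, hence regular'' requires that independence, not mere regular-sequence-ness; these are repairable. Second, the paper reads ``regular at $x$'' as regularity of the local homomorphism $\cO_{Y,y}\to\cO_{X,x}$, i.e.\ geometric regularity of the fibers over \emph{all} primes, which is why it needs the Lazard/minor argument to propagate injectivity from $k(x)$ to every $k(\mathfrak{p})$; your fiberwise reduction addresses only the closed fiber of that local map. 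The paper's route via Andr\'e's theorem and the transitivity triangle identifies the kernel of $f^*\Omega_{Y/Z}\otimes k(\mathfrak{p})\to\Omega_{X/Z}\otimes k(\mathfrak{p})$ with $H_1(X/Y;k(\mathfrak{p}))$ in one stroke, so all the separability bookkeeping is absorbed into the cotangent complex. An elementary repair of your approach would need the theory of formal smoothness or the Cartier equality to control $\ker\bigl(\Omega_{k(y)/k}\otimes_{k(y)}K\to\Omega_{K/k}\bigr)$, at which point it is no shorter than the paper's argument.
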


\begin{proof}
The statement is local, so we may assume that $X,Y,Z$ are the spectra of the local Noetherian rings $A,B,C$ respectively, with $x$ corresponding to the maximal ideal $m$ of $A$. By a theorem of Andr\'{e} -- see for instance \cite[Property 2.8]{Spiv} --,  regularity of $f$ is equivalent to showing that the first Andr\'{e}-Quillen homology group $H_1(X/Y;k(\mathfrak{p}))$ vanishes for every prime ideal $\mathfrak{p}$ in $A$. Recall that this homology group is the first homology group of the complex $L_{X/Y} \otimes k(\mathfrak{p})$, where $L_{X/Y}$ is the cotangent complex of $f$. The exact triangle
\begin{align*}
\xymatrix{L_{X/Y}[-1] \ar[r] & f^*L_{Y/Z} \ar[r] & L_{X/Z} \ar[r] & L_{X/Y} \ar[r] & }
\end{align*}
yields a long exact sequence

\begin{align*}
\xymatrix{0 \ar[r] & H_1(X/Y,k(\mathfrak{p})) \ar[r] & f^*\Omega_{Y/Z} \otimes k(\mathfrak{p}) \ar[r] & \Omega_{X/Z} \otimes k(\mathfrak{p}) \ar[r] & \Omega_{X/Y} \otimes k(\mathfrak{p}) \ar[r] & 0}
\end{align*}

\noindent where the $0$ on the left follows from the regularity of $g \circ f$. Thus, if $f$ is regular at $x$, the map $f^*\Omega_{Y/Z} \otimes k(x) \rightarrow \Omega_{X/Z} \otimes k(x)$ is clearly injective, and, conversely, regularity of $f$ follows if $f^*\Omega_{Y/Z} \otimes k(\mathfrak{p}) \rightarrow \Omega_{X/Z} \otimes k(\mathfrak{p})$ is injective for all prime ideals $\mathfrak{p} \subset A$. The regularity of $g \circ f$ further implies that $\Omega_{X/Z}$ is flat, and the smoothness of $g$ that $f^*\Omega_{X/Z}$ is a finitely generated free module of some rank $n$. By Lazard's theorem, we may write $\Omega_{X/Z}$ as an increasing limit $\dirlim F_i$ of finitely generated free modules, each containing the image of $f^*\Omega_{Y/Z}$. Assuming then that $f^*\Omega_{Y/Z} \otimes k(x) \rightarrow \Omega_{X/Z} \otimes k(x)$ is injective at the closed point, it follows that each map $f^*\Omega_{Y/Z} \otimes k(x) \rightarrow F_i \otimes k(x)$ is an injective map of finitely generated free modules; it is thus given by a matrix over $k(x) = A/m$ that has a non-vanishing $n \times n$ minor. This minor is thus a unit, and remains non-vanishing over each $k(\mathfrak{p})$ for any prime ideal $\mathfrak{p} \subset A$, which shows that $f^*\Omega_{Y/Z} \otimes k(\mathfrak{p}) \rightarrow F_i \otimes k(\mathfrak{p})$ remains injective. As taking a filtered direct limit of $A$-modules preserves exactness, it follows that $f^*\Omega_{Y/Z} \otimes k(\mathfrak{p}) \rightarrow \Omega_{X/Z} \otimes k(\mathfrak{p})$ remains injective, and that therefore $f$ is regular.
\end{proof}

\begin{corollary}
\label{injectiveimpliesregstack}
Let
\begin{align*}
\xymatrix{ X \ar[r]^f & Y \ar[r]^g & Z}
\end{align*}
be representable morphisms of locally Noetherian stacks, with $g$ smooth and $g \circ f$ regular at $x \in X$. Then $f$ is regular at $x$ if and only if the natural map $f^*\Omega_{Y/Z} \otimes k(x) \rightarrow \Omega_{X/Z} \otimes k(x)$ is injective.
\end{corollary}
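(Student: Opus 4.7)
The strategy is to reduce the claim to its scheme-theoretic analogue, Lemma~\ref{injectiveimpliesreg}, by smooth descent. Concretely, I would first choose a smooth surjection $\widetilde{Z} \to Z$ from a scheme and pull back to obtain representable morphisms $\widetilde{X} \to \widetilde{Y} \to \widetilde{Z}$ of algebraic spaces. Choosing further smooth surjections from schemes onto $\widetilde{Y}$ and onto the resulting fiber product over $\widetilde{Y}$, one may assume $\widetilde{X}, \widetilde{Y}, \widetilde{Z}$ are all locally Noetherian schemes. Fix a point $\widetilde{x} \in \widetilde{X}$ lying over $x \in X$.

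The argument then hinges on the observation that each hypothesis and conclusion of the corollary transfers between the original and scheme-theoretic settings. Smoothness of $g$ gives smoothness of $\widetilde{Y} \to \widetilde{Z}$, and regularity of a representable morphism is smooth-local on source and target, so that regularity of $g\circ f$ at $x$ is equivalent to regularity of $\widetilde{X} \to \widetilde{Z}$ at $\widetilde{x}$, and $f$ is regular at $x$ precisely when $\widetilde{X} \to \widetilde{Y}$ is regular at $\widetilde{x}$. Applied to the scheme-theoretic composite $\widetilde{X} \to \widetilde{Y} \to \widetilde{Z}$ at $\widetilde{x}$, Lemma~\ref{injectiveimpliesreg} then produces the equivalence claimed, provided that the injectivity of the map of differentials is itself transferred.

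For that last step, one uses that for a smooth surjection $u\colon \widetilde{W} \to W$ of representable stacks, the conormal sequence
\[
0 \longrightarrow u^{*}\Omega_{W/Z} \longrightarrow \Omega_{\widetilde{W}/Z} \longrightarrow \Omega_{\widetilde{W}/W} \longrightarrow 0
\]
is short exact with locally free cokernel, and remains so after tensoring with a residue field. Applying this for the smooth covers of $X$ and of $Y$, together with compatibility of $\Omega$ with smooth base change along $\widetilde{Z}\to Z$, matches injectivity of $f^{*}\Omega_{Y/Z}\otimes k(x) \to \Omega_{X/Z}\otimes k(x)$ with the analogous injectivity on the scheme-theoretic diagram at $\widetilde{x}$; the identification of the two residue-field conditions uses that $k(x) \to k(\widetilde{x})$ is faithfully flat.

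The main obstacle here is expository rather than mathematical: one needs cleanly at hand the formalism of K\"ahler differentials for representable morphisms of stacks, its compatibility with smooth base change, and the smooth-local character of regularity. This is precisely the preliminary material whose development the authors announce at the opening of the section; granted it, the smooth-descent reduction to Lemma~\ref{injectiveimpliesreg} proceeds without further incident.
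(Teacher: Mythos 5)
Your proposal is correct and follows essentially the same route as the paper: reduce to Lemma~\ref{injectiveimpliesreg} by passing to a smooth scheme cover, using that regularity is smooth-local and that the relevant (co)tangent modules are compatible with the base change. The only difference is that the paper gets by with the single cover $Z'\to Z$ --- representability already makes the pullbacks $Y'=Y\times_Z Z'$ and $X'=X\times_Z Z'$ schemes --- so the whole diagram is one Cartesian base change along $Z'\to Z$ and the conormal-sequence bookkeeping for your additional covers of $\widetilde{Y}$ and of the fiber product is not needed.
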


\begin{proof}
Let $Z' \rightarrow Z$ be a smooth cover, and form the diagram
\begin{align*}
\xymatrix{X' \ar[r] \ar[d] & Y' \ar[r] \ar[d] & Z' \ar[d] \\ X \ar[r] & Y \ar[r] & Z}
\end{align*}
where each square is Cartesian. By the representability hypothesis, $X'$ and $Y'$ are also schemes. Since for a Cartesian diagram
\begin{align*}
\xymatrix{T' \ar[r]^{l} \ar[d] & T \ar[d] \\ S' \ar[r]_k & S}
\end{align*}
with $k$ smooth, we have $l^*L_{T/S} = L_{T'/S'}$, the result follows from Lemma \ref{injectiveimpliesreg}.
\end{proof}

Let now $X \rightarrow Y \rightarrow Z$ be morphisms of log schemes. To apply the preceeding discussion to log regularity, we must understand the pullback of $\Omega_{\L Y/ \L Z}$ to $X$. We fix some notation: to each $\c{O}_X$ module $I$, we associate the scheme $X[I]$ defined by the sheaf of algebras $\c{O}_X[I] = \c{O}_X \oplus I$ over $\c{O}_X$, with multiplication of any two elements of $I$ defined to be $0$. The projection $p:X[I] \rightarrow X$ comes with an evident section $s: X \rightarrow X[I]$ corresponding to the projection of algebras $\c{O}_X[I] \rightarrow \c{O}_X$.
\begin{lemma}
\label{lifts}
Let $g:Y \rightarrow Z$ be a representable morphism of stacks, and $f:X \rightarrow Y$ a morphism from a scheme. Then $f^*\Omega_{Y/Z}$ co-represents the functor that takes an $\c{O}_X$-module $I$ to the set $\Hom^X_{Z}(X[I],Y)$ of isomorphism classes of 2-commutative lifts
\[
\begin{tikzcd}
X \ar[r,"f"] \ar[d, "s"] & Y \ar[d, "g"] \\ X[I] \ar[r,"g \circ f \circ p"] \ar[ur, dotted] & Z
\end{tikzcd}
\]
\end{lemma}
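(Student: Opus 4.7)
The plan is to reduce the statement to the classical scheme-level computation via a smooth atlas and descent.

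First I would settle the case where $Y$ and $Z$ are schemes. A lift $\tilde f: X[I]\to Y$ over $Z$ which restricts to $f$ along $s$ is, on structure sheaves, a homomorphism $\tilde f^{\sharp}\colon f^{-1}\cO_Y\to\cO_X\oplus I$ over $g^{-1}\cO_Z$ whose first component is $f^{\sharp}$. Writing $\tilde f^{\sharp}=(f^{\sharp},D)$ shows that specifying $\tilde f$ is the same as specifying a $g^{-1}\cO_Z$-linear derivation $D\colon f^{-1}\cO_Y\to I$. By the universal property of K\"ahler differentials and the adjunction $f^{-1}\dashv f_*$, these derivations are classified by $\cO_X$-linear maps $f^*\Omega_{Y/Z}\to I$, which settles the scheme case.

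For the general case, I would choose a smooth surjection $\pi\colon V\to Y$ with $V$ a scheme and form the Cartesian square $X'=X\times_Y V$, with projections $p'\colon X'\to X$ (smooth surjective) and $f'\colon X'\to V$ (a lift of $f$). After further pulling back along a smooth chart of $Z$ I may assume $V$ maps to a scheme covering $Z$, so Step 1 applies to $f'$. A lift $\tilde f\colon X[I]\to Y$ determines, via the smoothness of $\pi$ and the tautological lift $X'\to V$ of $X\to Y$, a lift $\tilde f'\colon X'[(p')^*I]\to V$ over $Z$ extending $f'$, but only up to the vertical torsor $\Hom_{\cO_{X'}}((f')^*\Omega_{V/Y},(p')^*I)$. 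By the scheme case, the set of all $V$-valued lifts is $\Hom_{\cO_{X'}}((f')^*\Omega_{V/Z},(p')^*I)$, and quotienting by the torsor action yields $\Hom_{\cO_{X'}}((f')^*\pi^*\Omega_{Y/Z},(p')^*I)=\Hom_{\cO_{X'}}((p')^*f^*\Omega_{Y/Z},(p')^*I)$, using the conormal sequence $\pi^*\Omega_{Y/Z}\hookrightarrow\Omega_{V/Z}\twoheadrightarrow\Omega_{V/Y}$ for the smooth morphism $\pi$ together with the standard definition of $\Omega_{Y/Z}$ via descent. Smooth descent of quasi-coherent sheaves along $p'$, combined with the observation that both sides of the claimed bijection are fppf sheaves on the category of $\cO_X$-modules, then produces the desired functorial isomorphism on $X$.

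The main obstacle is matching the descent data cleanly: one must verify that two lifts $X'[(p')^*I]\to V$ over $Z$ descend to the same isomorphism class of $Y$-valued lift $X[I]\to Y$ if and only if they differ by a vertical tangent vector of $V/Y$, and that this identification is compatible with the two pullbacks along the groupoid $X'\times_X X'\rightrightarrows X'$. This reduces to naturality of the scheme case together with a direct calculation with the conormal sequence of $\pi$, but keeping track of the $2$-isomorphisms introduced by the stack structure of $Y$ --- so that isomorphism classes on the $Y$-side correspond precisely to the quotient by the vertical action on the $V$-side --- is the principal technical nuisance.
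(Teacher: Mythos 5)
Your argument is viable but takes a genuinely different route from the paper, and a harder one. You resolve the stackiness of $Y$ by choosing a smooth atlas $\pi\colon V\to Y$ and then realizing $Y$-valued lifts as $V$-valued lifts modulo the vertical torsor $\Hom_{\cO_{X'}}((f')^*\Omega_{V/Y},(p')^*I)$, identified via the conormal sequence of $\pi$. The paper instead covers the \emph{base}: it takes a smooth atlas $Z'\to Z$ and uses representability of $g$ to conclude that $Y'=Y\times_ZZ'$ is a scheme; the universal property of the fiber product then gives an honest equality $\Hom^{X'}_{Z}(X'[p^*I],Y)=\Hom^{X'}_{Z'}(X'[p^*I],Y')$ with no quotient at all, and the lemma follows from the scheme case plus a single equalizer (descent) diagram for morphisms and for sheaf homs along $X'\to X$. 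What the paper's choice buys is precisely the disappearance of the two soft spots you flag but do not resolve: (i) surjectivity of lifting through $\pi$ (a lift $X'[(p')^*I]\to Y$ need only lift to $V$ locally, the obstruction living in $H^1$ of the vertical tangent sheaf, so you must work affine-locally or sheafify before quotienting); and (ii) the descent of \emph{isomorphism classes} of lifts along $X'\to X$, which requires knowing that lifts have no nontrivial automorphisms --- this is exactly what representability of $g$ provides, and it is used silently but essentially in the paper's equalizer diagram, whereas in your setup it must be extracted by hand when matching the torsor orbits with the $2$-isomorphisms. Your approach does have the merit of making the conormal sequence $0\to\pi^*\Omega_{Y/Z}\to\Omega_{V/Z}\to\Omega_{V/Y}\to 0$ and the local-freeness of $\Omega_{V/Y}$ do visible work, and it would generalize to non-representable $g$ at the level of the lifting groupoid; but for the statement as given, covering $Z$ rather than $Y$ is the decisive simplification you missed.
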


\begin{proof}
In the case when $Y$ and $Z$ are schemes this is immediate, as there is a canonical isomorphism between such lifts and derivations from $\c{O}_Y$ to $I$ over $\c{O}_Z$. In general, we can reduce to the case of schemes by observing that we have a bijection $\Hom_Z^{X}(X[I],Y) = \Hom_{X[I]}^X(X[I],X[I] \times_Z Y)$, obtained by using the representability of $g$ and factoring a given lift as
\[
\begin{tikzcd}
X \ar[r,"f \times s"] \ar[d, "s"] & Y \times_Z X[I] \ar[r,"\textup{pr}_1"] \ar[d] & Y \ar[d, "g"] \\ X[I] \ar[r,"\textup{id}"] \ar[ur, dotted] & X[I] \ar[r,"g \circ f \circ p"] & Z
\end{tikzcd}
\]
The statement about schemes then gives a natural isomorphism $\Hom_{Z}^X(X[I],Y) = \Hom_{X[I]}^X(X[I],Y \times_Z X[I]) \cong \Hom((f \times s)^*\Omega_{Y \times_Z X[I]/X[I]},I) \cong \Hom_{\c{O}_X}(f^*\Omega_{X/Y}, I)$.

\end{proof}

\begin{corollary}
\label{pullbackdifferentials}
Let
\[
\xymatrix{ X \ar[r]^f & Y \ar[r]^g & Z}
\]
be morphisms of fine log schemes. Then $w\L f^* \Omega_{\L Y/ \L Z} = f^*\Omega_{Y/Z}^{\log}$.
\end{corollary}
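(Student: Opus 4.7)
The plan is to verify the identity by showing both sides co-represent the same functor on quasi-coherent $\c{O}_X$-modules $I$. First I would apply Lemma~\ref{lifts} to the composite $X\xrightarrow{i_X}\L X\xrightarrow{\L f}\L Y\to\L Z$. Although the final morphism $\L Y\to\L Z$ is not representable in general, the proof of Lemma~\ref{lifts}, being based on smooth descent, adapts once we choose presentations of $\L Y$ and $\L Z$ by the toric-stack covers of Theorem~\ref{coverth}: after pullback along such a smooth cover, the computation reduces to the representable case of Lemma~\ref{lifts}. This identifies the pullback of $\L f^*\Omega_{\L Y/\L Z}$ along $i_X$ with the sheaf co-representing the functor $I\mapsto\Hom^{X}_{\L Z}(X[I],\L Y)$, i.e., isomorphism classes of lifts $X[I]\to\L Y$ extending the given map $X\to\L Y$ and compatible with the canonical composite $X[I]\to X\to\L Z$.

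Next I would unravel such a lift modularly. By the definition of $\L Y$, a morphism $X[I]\to\L Y$ is a pair $(M',h)$ where $M'$ is a fine log structure on $X[I]$ and $h\:(X[I],M')\to Y$ is a morphism of log schemes. The restriction-to-$X$ condition forces $M'|_X=M_X$ and $h|_X=f$; the key point is that $\L Z$-compatibility demands that the resulting object $(X[I],M',X[I]\to Y\to Z)\in\L Z$ coincide with the canonical pullback object $(X[I],p^*M_X,X[I]\to X\to Z)$, and this rigidifies $M'\toisom p^*M_X$ as log structures on $X[I]$. Thus a lift amounts precisely to a morphism of log schemes $\tilde h\:(X[I],p^*M_X)\to Y$ over $Z$ extending $f$.

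Finally, by the universal property of the Kato--Ogus sheaf of logarithmic differentials (see \cite{oguslog}), such log extensions are in canonical bijection with $\Hom_{\c{O}_X}(f^*\Omega^{\log}_{Y/Z},I)$. Combining the three steps and invoking Yoneda yields the claimed canonical isomorphism $i_X^*\L f^*\Omega_{\L Y/\L Z}\cong f^*\Omega^{\log}_{Y/Z}$.

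The main obstacle is the middle step: one must check carefully that the $\L Z$-compatibility rigidifies the log structure on $X[I]$ to the pullback one, reducing the problem from ``varying log structures on $X[I]$ equipped with log maps to $Y$'' to ``fixed pullback log structure with log maps to $Y$.'' The remaining ingredients -- namely the extension of Lemma~\ref{lifts} to non-representable targets through smooth covers, and the universal property of log differentials -- are either explicit in the literature or routine applications of smooth descent.
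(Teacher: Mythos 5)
Your proposal reproduces the paper's argument in all essentials: apply Lemma~\ref{lifts} to $X\to\L Y\to\L Z$, observe that the $\L Z$-compatibility rigidifies the log structure on $X[I]$ to the pullback one $p^*M_X=M_X[1+I]$ (so that a lift is exactly a log morphism $(X[I],M_X[1+I])\to Y$ over $Z$ extending $f$), and identify these with log derivations, hence with $\Hom_{\c{O}_X}(f^*\Omega^{\log}_{Y/Z},I)$. Your careful phrasing of the middle rigidification step is, if anything, slightly more precise than the paper's. The one place where you diverge is your worry that $\L Y\to\L Z$ is not representable: this worry is unfounded. A morphism $\u S\to\L Z$ already fixes the log structure $M_S$ and the $Z$-map, so the fiber of $\L Y\to\L Z$ over it is the set of factorizations $(\u S,M_S)\to Y\to Z$, and morphisms of log schemes with fixed log structure on the source have no nontrivial automorphisms; representability of $\L g$ is one of Olsson's basic facts and is implicitly presupposed throughout the paper (e.g.\ in Definition~4.1 and Theorem~\ref{theorem: logPiffwlogP}). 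Moreover, the workaround you sketch would not actually repair a genuine failure of representability, since base change along a smooth cover of the \emph{target} $\L Z$ preserves and reflects representability rather than creating it. Once this detour is deleted, Lemma~\ref{lifts} applies as stated and your proof coincides with the paper's.
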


\begin{proof}
By Lemma \ref{lifts}, the pullback $w\L f^* \Omega_{\L Y/ \L Z}$ co-represents the functor that assigns to the quasicoherent sheaf $I$ the set of lifts
\[
\begin{tikzcd}
X \ar[r,"w\L f"] \ar[d, "s"] & \L Y \ar[d, "\L g"] \\ X[I] \ar[r] \ar[ur, dotted] & \L Z
\end{tikzcd}
\]
where the bottom map is $\L g \circ w \L f \circ p$. By definition, such diagrams correspond to diagrams of log schemes
\[
\begin{tikzcd}
X \ar[r,"f"] \ar[d, "s"] & Y \ar[d, "g"] \\ X[I] \ar[r,"g \circ f \circ p"] \ar[ur, dotted] & Z
\end{tikzcd}
\]

\noindent where, as the morphism $X \rightarrow \L Z$ factors through $X[I]$, $X[I]$ has been given a log structure that makes $X \rightarrow X[I]$ strict. In other words, the log structure $M_{X[I]} = M_{X} \oplus_{\c{O}_X^*} \c{O}_{X[I]}^* = M_{X}[1+I]$, where $I$ has been embedded into the units of $\c{O}_{X[I]}$ by $x \mapsto 1 + x$. Such diagrams correspond precisely to a log derivation of $(\c{O}_Y,M_Y)$ into $I$ over $(\c{O}_Z,M_Z)$, and thus are in natural bijection with $\Hom_{\c{O}_X}(f^*\Omega_{Y/Z}^{\log},I)$. Therefore $w\L f^* \Omega_{\L Y/\L Z} = f^* \Omega_{Y/Z}^{\log}$.
\end{proof}

Suppose that $X$ is a log scheme, and $m_1,\cdots,m_n$ are elements of $\Gamma(X,M_X)$. From \ref{toriclem}, the map $\NN^n \rightarrow \Gamma(X,M_X)$, which sends the $i$-th generator $e_i$ to $m_i$ induces a map $X \rightarrow Z_{\NN^n} = \mathbb{A}^n$.

\begin{lemma}
\label{lem:regsequence}
Suppose $f: X \rightarrow Y$ is a log regular map of locally Noetherian schemes, and let $m_1,\cdots,m_n \in M_X(X)$ be elements such that their images $d\log m_i \in \Omega^{\log}_{X/Y,x} \otimes k(x)$ are linearly independent at $x \in X$. Then the induced map $g: X \rightarrow Y':= Y \times \mathbb{A}^n$ is log regular at $x$.
\end{lemma}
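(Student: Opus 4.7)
The plan is to use Theorem~\ref{theorem: equivalence} to replace the log regularity of $g$ by regularity of $w\L g\:X\to\L{Y'}$, and then to apply the differential criterion Corollary~\ref{injectiveimpliesregstack} to the factorization
$$X\xrightarrow{w\L g}\L{Y'}\xrightarrow{\L\pi}\L Y,$$
where $\pi\:Y'=Y\times\mathbb{A}^n\to Y$ is the first projection. The composition is $w\L f$, which is regular at $x$ by log regularity of $f$. The morphism $\pi$ is log smooth, being a base change of the standard toric $\mathbb{A}^n=Z_{\NN^n}\to\Spec\ZZ$, so $\L\pi$ is smooth (by the special case of Theorem~\ref{theorem: logPiffwlogP} for smoothness, originally due to Olsson). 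Thus Corollary~\ref{injectiveimpliesregstack} reduces the problem to injectivity of
$$(w\L g)^*\Omega_{\L{Y'}/\L Y}\otimes k(x)\longrightarrow\Omega_{X/\L Y}\otimes k(x).$$

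Next I will translate this into a statement about log differentials via Corollary~\ref{pullbackdifferentials}. Applied directly to $X\xrightarrow{g}Y'\xrightarrow{\pi}Y$, it identifies the left-hand side with $g^*\Omega^{\log}_{Y'/Y}\otimes k(x)$. For the right-hand side, I factor $X\to\L Y$ as $X\xrightarrow{i_X}\L X\xrightarrow{\L f}\L Y$; since $i_X$ is an open immersion by Theorem~\ref{basicth}(iii), we have $\Omega_{X/\L Y}=i_X^*\Omega_{\L X/\L Y}$, and applying Corollary~\ref{pullbackdifferentials} to the trivial composition $X\xrightarrow{\mathrm{id}_X}X\xrightarrow{f}Y$ (using that $w\L(\mathrm{id}_X)=i_X$) identifies this with $\Omega^{\log}_{X/Y}$. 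So the test becomes injectivity at $x$ of the natural pullback map $g^*\Omega^{\log}_{Y'/Y}\to\Omega^{\log}_{X/Y}$.

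To finish I will unwind the source concretely. The log scheme $\mathbb{A}^n=Z_{\NN^n}$ carries the free toric log structure generated by $e_1,\dots,e_n$, so $\Omega^{\log}_{\mathbb{A}^n/\Spec\ZZ}$ is free on $d\log e_1,\dots,d\log e_n$; by stability of log differentials under base change, $g^*\Omega^{\log}_{Y'/Y}$ is the free $\cO_X$-module on $d\log m_1,\dots,d\log m_n$, and the map above sends $d\log e_i\otimes 1$ to $d\log m_i$. The linear independence hypothesis is then precisely the required injectivity.

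I do not anticipate a serious obstacle; the only subtlety is the representability hypothesis in Corollary~\ref{injectiveimpliesregstack}, which needs to be verified for $w\L g$ and $\L\pi$. The former follows from the rigidification of morphisms out of a scheme noted after Lemma~\ref{toriclem}, and the latter from the identification $\L{Y'}=\L Y\times_{\L{\Spec\ZZ}}\L{\mathbb{A}^n}$ given by Theorem~\ref{basicth}(ii).
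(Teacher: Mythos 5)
Your proposal is correct and follows essentially the same route as the paper: factor through $X\to\L{Y'}\to\L Y$, apply the differential criterion of Corollary~\ref{injectiveimpliesregstack}, translate both sides into log differentials via Corollary~\ref{pullbackdifferentials}, and conclude from the linear independence of the $d\log m_i$. You merely spell out a few steps the paper leaves implicit (smoothness of $\L\pi$, representability, and the identification $\Omega_{X/\L Y}\cong\Omega^{\log}_{X/Y}$), all correctly.
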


\begin{proof}
Consider the induced sequence of morphisms $X \rightarrow \L {Y'} \rightarrow \L Y$. By Corollary \ref{injectiveimpliesregstack}, it suffices to check that the map $\Omega_{\L {Y'}/\L Y} \otimes k(x) \rightarrow \Omega_{X/\L Y} \otimes k(x)$ is injective. By Corollary \ref{pullbackdifferentials}, this map is isomorphic to the map of log differentials $f^*\Omega_{Y'/Y}^{\log} \otimes k(x) \rightarrow \Omega_{X/Y}^{\log} \otimes k(x)$. As sheaf $\Omega_{Y'/Y}$ is locally free, generated by the elements $d \log e_i$, which map to $d \log m_i \in \Omega_{X/Y}^{\log}$, injectivity follows by the hypothesis.
\end{proof}

As a corollary, we obtain the following technical lemma, which will be used in the proof of \ref{theorem: Chart Criterion}. From here on, for an \'etale sheaf $\c{F}$, we use the notation $\c{F}_{\o{x}}$ for the stalk of $\c{F}$ at $x$ in the \'etale topology, and $k(\o{x})$ for the residue field of $\c{O}_{X,\o{x}}$, i.e. a separable closure of $k(x)$.

\begin{lemma}
\label{lem: factorization}
Suppose $f:X \rightarrow Y$ is a log regular map between locally Noetherian fine log schemes. Then, \'etale locally around each point $x \in X$, there is a factorization of $X \rightarrow Y$ to $X \rightarrow Y':= Y \times \mathbb{A}^n \rightarrow Y$ such that $X \rightarrow Y'$ is log regular and $\overline{M}_{X/Y',\o{x}}^{\rm{gp}} \otimes k(\o{x})=0$.
\end{lemma}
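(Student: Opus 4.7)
The strategy is to produce $n$ sections $m_1,\dots,m_n \in \Gamma(X,M_X)$ (after passing to an \'etale neighborhood of $x$) whose classes lift a chosen $k(\overline{x})$-basis of the finite-dimensional vector space
\[
V \;:=\; \overline{M}_{X/Y,\overline{x}}^{\gp}\otimes_{\ZZ} k(\overline{x}),
\]
and to take $Y' = Y\times \mathbb{A}^n$ with the morphism $X\to \mathbb{A}^n$ induced by $\NN^n \to \Gamma(X,M_X)$, $e_i \mapsto m_i$, as described just before Lemma~\ref{lem:regsequence}. The log structure on $Y'$ will then contribute exactly the $\overline m_i$ to the pullback, killing $V$ and giving the desired vanishing; the bulk of the work is to verify that the resulting map $X\to Y'$ remains log regular.

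Concretely, since $M_X$ is fine, the stalk $\overline{M}_{X/Y,\overline{x}}^{\gp}$ is a finitely generated abelian group, so $n:=\dim_{k(\overline{x})} V$ is finite. I would pick lifts $\overline m_1,\dots,\overline m_n \in \overline{M}_{X,\overline{x}}^{\gp}$ of a chosen basis of $V$, lift these further to $m_1,\dots,m_n \in M_{X,\overline{x}}$ via the surjection $M_X \twoheadrightarrow \overline{M}_X$, and then spread these germs out to sections over an \'etale neighborhood of $x$, producing the factorization $X \to Y' \to Y$.

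To apply Lemma~\ref{lem:regsequence} I need the $d\log m_i$ to be linearly independent in $\Omega^{\log}_{X/Y,x}\otimes k(\overline{x})$. This follows from the fundamental short exact sequence
\begin{equation*}
0 \longrightarrow \Omega_{X/Y} \longrightarrow \Omega^{\log}_{X/Y} \longrightarrow \overline{M}_{X/Y}^{\gp}\otimes_{\ZZ}\cO_X \longrightarrow 0
\end{equation*}
due to Kato (cf.\ \cite{Klog,oguslog}): the projection to the right-hand term sends $d\log m_i$ to $\overline m_i \otimes 1$, and after tensoring with $k(\overline{x})$ this gives precisely our chosen basis of $V$. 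Lemma~\ref{lem:regsequence} then furnishes log regularity of $X\to Y'$ at $x$, and openness of the log regular locus (which follows from Corollary~\ref{cor: properties} combined with the usual openness of the regular locus for morphisms of locally Noetherian stacks) extends this to an \'etale neighborhood.

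Finally, for the vanishing: since $\overline{M}_{Y'} = \overline{M}_Y \oplus \NN^n$ as characteristic monoids and the chart $\NN^n \to M_X$ sends $e_i \mapsto m_i$, the subgroup $g^*M_{Y'}^{\gp} \subset M_X^{\gp}$ contains $f^*M_Y^{\gp}$ together with $m_1,\dots,m_n$. Hence $\overline{M}_{X/Y',\overline{x}}^{\gp}$ is the quotient of $\overline{M}_{X/Y,\overline{x}}^{\gp}$ by the subgroup generated by $\overline m_1,\dots,\overline m_n$, and tensoring with $k(\overline{x})$ yields $V/V = 0$. The main obstacle in this argument is the injectivity of $d\log$ on $\overline{M}_{X/Y,\overline{x}}^{\gp}\otimes k(\overline{x})$, which is exactly the content of the Kato exact sequence above; everything else is either a direct linear-algebra choice or a formal consequence of Lemma~\ref{lem:regsequence}.
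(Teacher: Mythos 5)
This is essentially the paper's proof: pick $m_1,\dots,m_n\in M_{X,\overline{x}}$ whose images in $\overline{M}^{\gp}_{X/Y,\overline{x}}\otimes k(\overline{x})$ form a basis, spread them out \'etale-locally, and invoke Lemma~\ref{lem:regsequence}; your appeal to openness of the log regular locus is not needed, since log regularity of $X\to Y'$ is only required at $x$ (which is all Lemma~\ref{lem:regsequence} gives anyway). One small correction: the sequence $\Omega_{X/Y}\to\Omega^{\log}_{X/Y}\to\cO_X\otimes\overline{M}^{\gp}_{X/Y}\to 0$ is in general only right exact rather than short exact (and there is no injectivity of a ``$d\log$'' map to be established), but your argument uses only the right-hand surjection and the identity $d\log m_i\mapsto\overline{m}_i\otimes 1$, so the linear independence conclusion stands.
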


\begin{proof}
Consider the exact sequence of log differentials $\Omega_{X/Y} \otimes  k(\o{x}) \rightarrow \Omega_{X/Y}^{\rm{log}} \otimes k(\o{x}) \rightarrow \overline{M}_{X/Y,\o{x}}^{\rm{gp}} \otimes k(\o{x})$ and take elements $m_1,\cdots,m_n \in M_{X,\o{x}}$ such that the image of $d\log m_i$ forms a basis for $\overline{M}_{X/Y,\o{x}}^{\rm{gp}} \otimes k(\o{x})$. Take some \'etale neighborhood $U$ of $x$ on which the elements $m_i$ are defined, and a lift $y$ of $x$. Then the elements $d\log m_i \in \Omega_{U/Y,y} \otimes k(y)$ are linearly independent, by assumption, so the map $U \rightarrow Y'$ is log regular at $y$ from  \ref{lem:regsequence}. Since $\o{M}_{X/Y',\o{x}}^{\gp} \otimes k(\o{x}) = \o{M}_{U/Y',\o{y}}^{\gp} \otimes k(\o{y})$, the conclusion follows.
\end{proof}

Next, fix a base ring $A$. As in subsection \ref{subsection: Olssonstack}, we denote $A_P = \Spec A[P]$ and $\mathcal{A}_P = [\Spec A[P]/ \Spec A[P^{\rm{gp}}]]$.

\begin{lemma}
\label{lem:log-smooth}
Let $P \rightarrow Q$ be an injective homomorphism of monoids. Then the map $A_Q \rightarrow \mathcal{A}_Q \times_{\mathcal{A}_P} A_P$ is smooth if and only if the torsion part of the cokernel of $P^{\rm{gp}} \rightarrow Q^{\rm{gp}}$ has order invertible in $A$.
\end{lemma}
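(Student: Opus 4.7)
The plan is to realize $A_Q\to\mathcal{A}_Q\times_{\mathcal{A}_P}A_P$ as a torsor under an explicit diagonalizable group scheme, and then to read off smoothness from the structure of that group.

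First I would make the fiber product concrete. Since $A_P\to\mathcal{A}_P$ is a torsor under $T_P:=\Spec A[P^{\gp}]$, and the composite $A_Q\to\mathcal{A}_P$ factors through $A_P$, the pulled-back $T_P$-torsor on $A_Q$ is canonically trivialized, yielding
\[
A_Q\times_{\mathcal{A}_P}A_P\;\cong\;A_Q\times T_P.
\]
Descending the natural $T_Q:=\Spec A[Q^{\gp}]$-action (standard on $A_Q$; on $T_P$ by translation via $T_Q\to T_P$) produces a presentation
\[
\mathcal{A}_Q\times_{\mathcal{A}_P}A_P\;\cong\;[(A_Q\times T_P)/T_Q],
\]
in which the canonical section $A_Q\to\mathcal{A}_Q\times_{\mathcal{A}_P}A_P$ is identified with $a\mapsto[(a,e)]$.

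Next I would check that this section is a torsor under $K:=\ker(T_Q\to T_P)$. Two classes $[(a,e)]$ and $[(a',e)]$ coincide exactly when there is a $g\in T_Q$ with $ga=a'$ and $g|_{T_P}=e$, i.e. $g\in K$. Hence
\[
A_Q\times_{\mathcal{A}_Q\times_{\mathcal{A}_P}A_P}A_Q\;\cong\;A_Q\times K.
\]
Combined with the fact that $T_Q\to T_P$ is fppf surjective (so lifts of $[(a,t)]$ to $A_Q$ exist fppf-locally on the target), this shows that $A_Q\to\mathcal{A}_Q\times_{\mathcal{A}_P}A_P$ is representable, faithfully flat, and a $K$-torsor. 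In particular its smoothness is equivalent to smoothness of $K\to\Spec A$.

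Finally I would identify $K$ and conclude. Since $T_Q\to T_P$ is dual to the inclusion $A[P^{\gp}]\hookrightarrow A[Q^{\gp}]$, one has $K=\Spec A[C]$, where $C:=\operatorname{coker}(P^{\gp}\to Q^{\gp})$. Writing $C=C_{\rm free}\oplus C_{\rm tor}$, the diagonalizable group scheme $K$ decomposes as a product of a torus (always smooth) and $\Spec A[C_{\rm tor}]\cong\prod_i\mu_{n_i}$ for the invariant factors $n_i$ of $C_{\rm tor}$; this is smooth over $\Spec A$ precisely when every $n_i$ is invertible in $A$, i.e. when the torsion of $C$ has order invertible in $A$, which is the assertion of the lemma. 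The main bookkeeping burden lies in the first step: verifying the $T_Q$-equivariance of the trivialization so that the stated quotient really presents $\mathcal{A}_Q\times_{\mathcal{A}_P}A_P$, and recognizing the section in that presentation.
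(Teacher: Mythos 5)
Your proof is correct, but it takes a genuinely different route from the paper's. The paper's argument is a two-line reduction to machinery already in place: the map $\mathcal{A}_Q\times_{\mathcal{A}_P}A_P\to\L{A_P}$ is \'etale by Theorem~\ref{coverth}, so smoothness of $A_Q\to\mathcal{A}_Q\times_{\mathcal{A}_P}A_P$ is equivalent to smoothness of $A_Q\to\L{A_P}$, i.e.\ to weak log smoothness, hence to log smoothness of $A_Q\to A_P$ by Olsson, hence to the cokernel condition by Kato's chart criterion. You instead compute the fiber product directly: trivializing the pulled-back $T_P$-torsor over $A_Q$, presenting the target as $[(A_Q\times T_P)/T_Q]$, identifying the canonical map as a torsor under $K=\ker(T_Q\to T_P)$, and then using Cartier duality to get $K=\Spec A[C]$ with $C=\operatorname{coker}(P^{\gp}\to Q^{\gp})$, so that smoothness reduces to smoothness of $\prod_i\mu_{n_i}$. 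Your computations check out (the torsor claim, the fppf-local lifting via surjectivity of $T_Q\to T_P$, and the reduction of smoothness of a torsor to smoothness of its structure group are all fine). What your approach buys is self-containedness: it uses neither Olsson's comparison of weak and strong log smoothness nor Kato's chart criterion, only quotient-stack bookkeeping and the anti-equivalence between finitely generated abelian groups and diagonalizable group schemes; what it costs is exactly that bookkeeping, which the paper's framework renders unnecessary. Two small points worth making explicit: injectivity of $P^{\gp}\to Q^{\gp}$ (needed so that $T_Q\to T_P$ is faithfully flat and so that $K$ is dual to the full cokernel) follows from injectivity of $P\to Q$ because fine monoids are integral; and ``order invertible'' in the statement must be read as referring to the torsion part of the cokernel, as you implicitly do and as the paper does elsewhere.
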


\begin{proof}
Since the map $\mathcal{A}_Q \times_{\mathcal{A}_P}A_P \rightarrow \L {A_P}$ is \'etale, the map $A_Q \rightarrow \mathcal{A}_Q \times_{\mathcal{A}_P} A_P$ is smooth if and only if the map $A_Q \rightarrow \L {A_P}$ is smooth. This is equivalent to log smoothness of $A_Q \rightarrow A_P$, which is equivalent to the statement about the cokernel -- see for instance \cite{Klog}.
\end{proof}

Before proceeding to a chart criterion for log regularity, we recall for the convenience of the reader the notion of a neat chart. Given a morphism $f: X \rightarrow Y$ of log schemes, a chart $\phi: P \rightarrow Q$ for $f$ at a point $x \in X$ is called neat at $x$ if $\phi$ is injective and the induced map $Q^\gp/P^\gp \rightarrow \o{M}^\gp_{X/Y,\o{x}}$ is an isomorphism. In order to avoid awkward notation, we use the following convention when discussing charts defined in different topologies. Let $y = f(x) \in Y$. Given a chart $P \rightarrow M_Y$ defined fppf locally around $y$, we say that a chart for $f$ exists \'etale (resp. fppf) locally around $x \in X$ if the following holds: given an fppf neighborhood $p: Y' \rightarrow Y$ of $y$ on which the chart $P \rightarrow p^*M_Y = M_{Y'}$ is defined, there exists an \'etale (resp. fppf) neighborhood $X'' \rightarrow X \times_Y Y' = X'$ of a lift $x' \in X'$ of $x \in X$ and a chart $Q \rightarrow M_{X''}$ such that $P \rightarrow Q$ is a chart for the morphism $X'' \rightarrow Y'$. In this language, given any fppf chart $P \rightarrow M_Y$,  a neat chart for a morphism $f$ extending $P$ always exists flat locally on $X$, but exists \'etale locally around a point $x \in X$ as well if the order of the torsion part of $\o{M}_{X/Y,\o{x}}$ is invertible at $x$ -- see \cite[III,Theorem 1.2.7]{oguslog}. \\

With these results at hand, we can state the following theorem:

\begin{theorem}
\label{theorem: Chart Criterion} Let $f:X \rightarrow Y$ be a morphism of locally Noetherian fine log schemes.
\begin{enumerate}
\item \label{existencechart} The map $f:X \rightarrow Y$ is log regular at $x \in X$, if and only if, for any fppf chart $P \rightarrow M_Y$, \'etale locally around $x$, there exists an injective chart $P \rightarrow Q$ for $f$ such the torsion part of the cokernel of $P^{\rm{gp}} \rightarrow Q^{\rm{gp}}$ has order invertible in $\mathcal{O}_{X,\overline{x}}$, and such that the morphism $X \rightarrow Y_P[Q]$ is regular.
\item \label{neatchart} If $f$ is log regular, then, given any fppf chart $P \rightarrow M_Y$ and any neat chart $P \rightarrow Q$ for $f$ \'etale locally on $X$, the morphism $X \rightarrow Y_P[Q]$ is regular.
\end{enumerate}
\end{theorem}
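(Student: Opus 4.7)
I would prove Part (2) first, then deduce Part (1) from it.

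\textbf{Part (2).} Assume $f$ is log regular at $x$ and $P \to Q$ is a neat chart for $f$ defined \'etale locally around $x$. Per the convention preceding the theorem, \'etale-local existence of a neat chart forces the torsion of $Q^{\gp}/P^{\gp} \cong \overline M^{\gp}_{X/Y,\overline x}$ to have order invertible in $\mathcal O_{X,\overline x}$, so by Lemma \ref{lem:log-smooth} the morphism $\pi \colon Y_P[Q] \to \mathcal Y_P[Q]$ is smooth at $\phi(x)$. By Theorem \ref{theorem: equivalence}, log regularity of $f$ amounts to regularity of $w \L f$, and Theorems \ref{coverth} and \ref{logXth} give that $\mathcal Y_P[Q] \to \L Y$ is \'etale, whence $X \to \mathcal Y_P[Q]$ is regular at $x$. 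Applying Corollary \ref{injectiveimpliesregstack} to the chain $X \to Y_P[Q] \to \mathcal Y_P[Q]$, regularity of $X \to Y_P[Q]$ reduces to injectivity of
\[
\phi^* \Omega_{Y_P[Q]/\mathcal Y_P[Q]} \otimes k(\overline x) \longrightarrow \Omega_{X/\mathcal Y_P[Q]} \otimes k(\overline x).
\]
The source identifies, via smoothness of $\pi$ and its $T_{Q/P}$-torsor interpretation, with $(Q^{\gp}/P^{\gp}) \otimes k(\overline x)$, generated by classes of $d\log q$. Through Corollary \ref{pullbackdifferentials}, the strictness of $X \to \mathcal Y_P[Q]$, and the log cotangent sequence, this map is identified with the natural $d\log$ morphism into $\Omega^{\log}_{X/Y} \otimes k(\overline x)$. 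The composition
\[
(Q^{\gp}/P^{\gp}) \otimes k(\overline x) \xrightarrow{d\log} \Omega^{\log}_{X/Y} \otimes k(\overline x) \twoheadrightarrow \overline M^{\gp}_{X/Y,\overline x} \otimes k(\overline x)
\]
equals the isomorphism provided by neatness, so the $d\log$ map is injective, and the desired injectivity follows.

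\textbf{Part (1).} The ``if'' direction is immediate: given the chart $P \to Q$, the factorization $X \to Y_P[Q] \to \mathcal Y_P[Q] \to \L Y$ has a regular first map (by hypothesis), a smooth second map (Lemma \ref{lem:log-smooth} using the torsion hypothesis), and an \'etale third map (Theorems \ref{coverth} and \ref{logXth}); the composition is therefore regular at $x$, and Theorem \ref{theorem: equivalence} concludes. For the converse, given $f$ log regular and an fppf chart $P \to M_Y$, Lemma \ref{lem: factorization} produces, \'etale locally at $x$, a factorization $X \to Y' := Y \times \mathbb{A}^n \to Y$ with $X \to Y'$ log regular and $\overline M^{\gp}_{X/Y',\overline x} \otimes k(\overline x) = 0$. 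The vanishing forces $\overline M^{\gp}_{X/Y',\overline x}$ to be torsion of invertible order, so a neat chart $P' := P \oplus \NN^n \to Q$ for $X \to Y'$ exists \'etale locally. Part (2) yields regularity of $X \to Y'_{P'}[Q]$. A direct fiber-product computation shows $Y'_{P'}[Q] = Y_P[Q]$, and since $(P')^{\gp}/P^{\gp} \cong \ZZ^n$ is torsion-free, the torsion of $Q^{\gp}/P^{\gp}$ coincides with that of $Q^{\gp}/(P')^{\gp}$ and is therefore invertible, producing the required chart $P \to Q$.

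I expect the main obstacle to be the differential identification in Part (2): carefully matching the cotangent map $\phi^* \Omega_{Y_P[Q]/\mathcal Y_P[Q]} \otimes k(\overline x) \to \Omega_{X/\mathcal Y_P[Q]} \otimes k(\overline x)$ with the $d\log$ map $(Q^{\gp}/P^{\gp}) \otimes k(\overline x) \to \Omega^{\log}_{X/Y} \otimes k(\overline x)$, via Corollary \ref{pullbackdifferentials} and the log cotangent sequence.
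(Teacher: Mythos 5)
Your overall architecture inverts the paper's: you prove part (\ref{neatchart}) first by a cotangent criterion (Corollary \ref{injectiveimpliesregstack} applied to $X \to Y_P[Q] \to \c{Y}_P[Q]$, identifying the relevant map with $d\log\colon (Q^{\gp}/P^{\gp})\otimes k(\o x) \to \Omega^{\log}_{X/Y}\otimes k(\o x)$ and using neatness to see it is injective), whereas the paper proves part (\ref{existencechart}) first, reducing via Lemma \ref{lem: factorization} to the case $\o M^{\gp}_{X/Y,\o x}\otimes k(\o x)=0$, where $Y_P[Q]\to Y$ is log \'etale and Corollary \ref{cor: properties}(5),(6) finish; part (\ref{neatchart}) is then a byproduct. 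Your ``if'' direction of (\ref{existencechart}) and your deduction of its converse from Lemma \ref{lem: factorization} agree with the paper, and your $d\log$ argument for (\ref{neatchart}) is a legitimate alternative (it is essentially Lemma \ref{lem:regsequence} applied to all of $Q$ at once rather than only to a basis of $\o M^{\gp}_{X/Y,\o x}\otimes k(\o x)$).

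There is, however, one genuine gap, located exactly where your proof of (\ref{neatchart}) must begin. You assert that ``per the convention preceding the theorem, \'etale-local existence of a neat chart forces the torsion of $Q^{\gp}/P^{\gp}$ to have order invertible in $\c O_{X,\o x}$.'' The convention states only the converse implication (invertible torsion implies existence of an \'etale-local neat chart), and the implication you need is false in general: for a split log structure $M_X=\c O_X^*\oplus \o M_X$ a neat chart exists Zariski-locally whatever the torsion of $\o M^{\gp}_{X/Y,\o x}$ is. For \emph{log regular} $f$ the implication is true, but in the paper it is a \emph{consequence} of the theorem (see the remark following it), derived precisely by extending $P\to Q$ to a neat chart $P\oplus\NN^n\to Q$ for $X\to Y'=Y\times\mathbb{A}^n$ and applying the snake lemma to $0\to\ZZ^n\to Q^{\gp}/P^{\gp}\to Q^{\gp}/(P^{\gp}\oplus\ZZ^n)\to 0$, so that the torsion of $Q^{\gp}/P^{\gp}$ injects into $\o M^{\gp}_{X/Y',\o x}$, which is finite of invertible order because it dies after $\otimes\, k(\o x)$. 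Without this, the smoothness of $Y_P[Q]\to\c Y_P[Q]$ that you need for Corollary \ref{injectiveimpliesregstack} (via Lemma \ref{lem:log-smooth}) is unavailable, so your proof of (\ref{neatchart}) as written is circular at its first step. Note that your Part (1) is unaffected, since there the chart is produced for $X\to Y'$ where $\o M^{\gp}_{X/Y',\o x}\otimes k(\o x)=0$ genuinely forces invertibility; the fix for Part (2) is to run the Lemma \ref{lem: factorization} construction and the snake-lemma argument first, after which either your differential argument or the paper's log-\'etale argument completes the proof.
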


\begin{proof}
Suppose first that a chart $\phi: P \rightarrow Q$ as in the statement of condition (\ref{existencechart}) exists. Then, every map in the composition $X \rightarrow Y_P[Q] \rightarrow \c{Y}_P[Q] \rightarrow \L Y$ is regular at $x$ by Theorem \ref{coverth} and Lemma \ref{lem:log-smooth}, and thus the composition is also regular. Conversely, suppose $f$ is log regular at $x$. Assume first that $\overline{M}_{X/Y,\o{x}}^{\rm{gp}} \otimes k(\o{x}) = 0$. Then, the group $\overline{M}_{X/Y,\o{x}}^{\rm{gp}}$ is a finite group of order invertible in $\mathcal{O}_{X,\overline{x}}$, and we may take a neat chart $P \rightarrow Q$ for $f$ \'etale locally around $x$. As \'etale maps preserve regularity, and furthermore regularity can be checked \'etale locally, we may assume that the chart $P \rightarrow Q$ is defined globally. Then, the morphism $X \rightarrow Y$ is log regular, the morphism $Y_P[Q] \rightarrow Y$ is log \'etale, and hence $X \rightarrow Y_P[Q]$ is strict and log regular. It follows that $X \rightarrow Y_P[Q]$ is regular. If $\overline{M}_{X/Y,\o{x}}^{\rm{gp}} \otimes k(\o{x})$ is not $0$, take \'etale locally on $X$ a factorization $X \rightarrow Y':= Y \times \mathbb{A}^n \rightarrow Y$ as in Lemma \ref{lem: factorization}, which has $\overline{M}_{X/Y',\o{x}}^{\rm{gp}} \otimes k(\o{x}) = 0$. Take \'etale locally a neat chart $P \oplus \NN^n \rightarrow Q$ for $X \rightarrow Y'$, and look at the composed map $P \rightarrow Q$. By the snake lemma, there is a short exact sequence
\begin{align*}
\xymatrix{0 \ar[r] & \ZZ^n \ar[r] & Q ^{\rm{gp}}/P^{\rm{gp}}  \ar[r] & Q^{\rm{gp}}/(P^{\rm{gp}} \oplus \ZZ^n) \ar[r] & 0  }
\end{align*}
from which it follows that the torsion part of the cokernel $Q^{\rm{gp}} /P^{\rm{gp}}$ injects into $\overline{M}_{X/Y',\o{x}}^{\rm{gp}}$, and hence is invertible in $\mathcal{O}_{X,\overline{x}}$. We have already seen that the map $X \rightarrow Y'_{P \oplus \NN^n}[Q]$ is regular, as $\o{M}_{X/Y',\o{x}}\otimes k(\o{x}) = 0$; since $Y'_{P \oplus \NN^n}[Q] = Y_P[Q]$, the proof of condition (\ref{existencechart}) is complete. \\

To see that condition (\ref{neatchart}) holds, it suffices to notice that if $P \rightarrow Q$ is a neat chart for $f$ at $x$, then the map $P \oplus \NN^n \rightarrow M_X$ for the intermediate morphism $X \rightarrow Y'$  constructed in \ref{lem: factorization} can be chosen so that it lifts to a chart $P \oplus \NN^n \rightarrow Q$, which is also seen to be neat from the diagram

\begin{align*}
\xymatrix{0 \ar[r] & \ZZ^n \ar[d] \ar[r] & Q ^{\rm{gp}}/P^{\rm{gp}}  \ar[r] \ar[d] & Q^{\rm{gp}}/(P^{\rm{gp}} \oplus \ZZ^n) \ar[d] \ar[r] & 0  \\
 0 \ar[r] & \ZZ^n \ar[r] & \o{M}_{X,\o{x}}^{\gp}/\o{M}_{Y,f(\o{x})}^{\gp}  \ar[r] & \o{M}_{X,\o{x}}^{\rm{gp}}/(\o{M}_{Y,f(\o{x})}^{\rm{gp}} \oplus \ZZ^n) \ar[r] & 0  }
\end{align*}

As we have seen in the proof of (\ref{existencechart}) that the map $X \rightarrow Y_P[Q]$ constructed this way is regular, (\ref{neatchart}) follows.

\end{proof}

\begin{remark}
Condition (\ref{neatchart}) in Theorem \ref{theorem: Chart Criterion} is stronger than condition \ref{existencechart}: it asserts that the map $X \rightarrow Y_P[Q]$ is regular for \emph{any} \'etale local neat chart. However, note that it is not always possible to find such a chart. Indeed, a sufficient condition is that the torsion part of $\o{M}_{X/Y,\o{x}}$ has order invertible in $k(\o{x})$. For a log regular morphism, the condition is also necessary: if $f:X \rightarrow Y$ is log regular at $x$, and $P \rightarrow Q$ is an \'etale local chart for $f$, the map $X \rightarrow Y_P[Q]$ can never be regular if the cokernel of $P \rightarrow Q$ has torsion of order non-invertible at $x$: if it were, the map from $Y_P[Q] \rightarrow \L Y$ would be regular at the image of $x$ as well by \cite[$\rm{IV_2}$, 6.5.2(i)]{ega}, which contradicts \ref{lem:log-smooth}. Thus a log regular morphism $f$ for which $\o{M}_{X/Y,\o{x}}$ has torsion non-invertible in $k(\o{x})$ cannot have an \'etale local neat chart.
\end{remark}

The chart criterion easily implies a logarithmic version of Popescu's theorem.

\begin{corollary}
\label{cor: Popescu}
Let $f:X \rightarrow Y$ be a log regular morphism of locally Noetherian fine log schemes.
\begin{enumerate}
\item \'Etale locally around $x \in X$, $f$ factors as a composition of a log smooth morphism and a morphism which is log regular and strict.
\item \'Etale locally around $x \in X$, $f$ is the inverse limit of log smooth morphisms.
\end{enumerate}
\end{corollary}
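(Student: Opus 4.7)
The plan is to use Theorem~\ref{theorem: Chart Criterion} to factor $f$ \'etale-locally as a strict regular morphism followed by a log smooth one, and then apply the classical N\'eron--Popescu theorem to the first factor in order to get the inverse limit description.

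For (1), I will begin with any fppf chart $P\to M_Y$ around $f(x)$ and invoke Theorem~\ref{theorem: Chart Criterion}(\ref{existencechart}) to extract, \'etale locally around $x$, an injective chart $P\to Q$ for $f$ such that the torsion part of $Q^{\gp}/P^{\gp}$ has order invertible in $\mathcal{O}_{X,\overline{x}}$ and the induced morphism of underlying schemes $X\to Y_P[Q]$ is regular. Since both $X$ and $Y_P[Q]$ carry the log structure associated to the chart $Q$, the morphism $X\to Y_P[Q]$ is strict, and hence log regular by Corollary~\ref{cor: properties}. The second factor $Y_P[Q]\to Y$ is the base change of $Z_Q\to Z_P$, which is log smooth by Lemma~\ref{lem:log-smooth} (equivalently by Kato's chart criterion) thanks to the invertibility assumption on the torsion of $Q^{\gp}/P^{\gp}$. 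This yields the desired factorization.

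For (2), I will feed the strict regular morphism $X\to Y_P[Q]$ just produced into the classical N\'eron--Popescu theorem: this writes $\mathcal{O}_X = \varinjlim A_i$ as a filtered colimit of smooth $\mathcal{O}_{Y_P[Q]}$-algebras, and correspondingly $X=\varprojlim X_i$ with $X_i=\Spec A_i$. Endowing each $X_i$ with the log structure pulled back from $Y_P[Q]$ makes $X_i\to Y_P[Q]$ strict and smooth, hence log smooth; composing with the log smooth $Y_P[Q]\to Y$ then makes each $X_i\to Y$ log smooth, as required.

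The step I expect to require the most care is checking that the inverse limit of the log schemes $X_i$ truly recovers $X$ together with its intrinsic log structure $M_X$. This should follow from strictness at every stage: strictness of each $X_i\to Y_P[Q]$ and of $X\to Y_P[Q]$ forces every log structure in the inverse system, as well as the one on $X$, to be compatibly pulled back from $Y_P[Q]$, so the limit log structure on $\varprojlim X_i$ agrees with $M_X$.
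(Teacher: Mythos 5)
Your proposal is correct and follows essentially the same route as the paper: factor $f$ \'etale locally as $X\to Y_P[Q]\to Y$ via the chart criterion, with the first map strict and regular and the second log smooth, then apply N\'eron--Popescu to the strict regular factor. The paper's own proof is just a terser version of this, leaving the Popescu step and the strictness bookkeeping implicit where you spell them out.
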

\begin{proof}
By the chart criterion \ref{theorem: Chart Criterion}, $f$ locally factors as $X \rightarrow Y_P[Q] \rightarrow Y$ with $X \rightarrow Y_P[Q]$ strict and regular and $Y_P[Q] \rightarrow Y$ log smooth. Since the strict and regular map, that is, the underlying map of schemes $\u X \rightarrow \u{Y_P[Q]}$ is a filtered limit of smooth morphisms by Popescu's theorem (see \cite[Theorem~1.8]{Popescu} or \cite[Theorem~1.1]{Spiv}), the result follows.
\end{proof}

\end{section}

\subsection{Comparison to absolute log regularity}
Suppose now that $X$ is also saturated and log regular over a regular base $S=\Spec A$ with the trivial log structure. We further assume that the log structure $M_X$ is defined in the Zariski topology of $X$. As $X$ is saturated, $\o{M}_X$ is torsion free, and hence, we can find a neat chart $0 \rightarrow P$ for the structure morphism $X \rightarrow S$. Necessarily then $P$ is isomorphic to $\o{M}_{X,x}$, and, by the chart criterion \ref{theorem: Chart Criterion}, the map $X \rightarrow S[P]=\Spec A[P]$ is regular. Let $I$ denote the ideal generated by $M_{X,x}-\c{O}_{X,x}^*$ in $\c{O}_{X,x}$ and let $J$ be the ideal in $A[P]$ generated by $P-0$. Clearly, $I=J\c{O}_{X,x}$. As the morphism $\Spec(\c{O}_{X,x})\to S[P]$ is regular, it follows that $\Spec({O}_{X,x}/I)$ is regular over $\Spec(A[P]/J)=S$, and hence regular. Let $d$ be its dimension relative to $S$. We have
\begin{align*}
 \dim \c{O}_{X,x}  & = d + \dim A[P] = d + \dim A + \textup{rank}(P^\gp) = \\
& = \dim \c{O}_{X,x}/I + \textup{rank}(\o{M}_{X,x}^\gp)
\end{align*}

It follows that $X$ is log regular in the sense of Kato, \cite{Ktor}. Thus our notion of log regularity extends the absolute notion given in the literature to the relative case.

\bibliography{logregularrefs}
\end{document}